\newcommand{\pnt}[3][black]{%
	\begin{scope}[shift={#2}];
		\fill[color=#1,shift only] (0,0) circle(#3);
	\end{scope}}
\DeclareMathOperator{\sign}{sign}
\newcommand{\p}{\partial}
\newcommand{\id}{\textrm{id}}
\newcommand{\NN}{\mathbb N}
\newcommand{\IN}{\mathbb Z}
\newcommand{\RN}{\mathbb R}
\newcommand{\TT}{\mathbb T}
\theoremstyle{plain}
\newtheorem{thm}{Theorem}[section]
\newtheorem{lem}[thm]{Lemma}
\theoremstyle{definition}
\theoremstyle{remark}
\newtheorem{rmk}[thm]{Remark}
\numberwithin{equation}{section}
\begin{document}

\title{Escaping orbits are rare in the quasi-periodic Littlewood boundedness problem}
\author{Henrik Schließauf
	\thanks{
		Electronic address: \texttt{hschlies@math.uni-koeln.de}
	}
}
\affil{
	Universität zu Köln, Mathematisches Institut, \\
	Weyertal 86-90, 50931 Köln, Germany
}
\date{November 29, 2018}

\maketitle

\begin{abstract}
	\noindent 
	We study the superlinear oscillator equation $\ddot{x}+ \lvert x \rvert^{\alpha-1}x = p(t)$ for $\alpha\geq 3$, where $p$ is a quasi-periodic forcing with no Diophantine condition on the frequencies and show that typically the set of initial values leading to solutions $x$ such that $\lim_{t\to\infty} (\lvert x(t) \rvert + \lvert \dot{x}(t) \rvert) = \infty$ has Lebesgue measure zero, provided the starting energy $\lvert x(t_0) \rvert + \lvert \dot{x}(t_0) \rvert$ is sufficiently large. 
\end{abstract}

\section{Introduction}

The dynamics of the Duffing-type equation
\begin{equation} \label{duffing eq}
\ddot x + G'(x) = p(t),
\end{equation}
have been studied extensively due to its relevance as a model for the motion of a classical particle in a one-dimensional potential field $G(x)$ affected by an external time-dependent force $p(t)$. In the 1960's, Littlewood \cite{Littlewood_unbounded_solutions} asked whether solutions of (\ref{duffing eq}) stay bounded in the $(x,\dot{x})$-phase space if either
\begin{align*}
&(i) \; G'(x)/x \to +\infty \;\; \text{as} \;\; x \to \pm \infty \\
\text{or} \;\; &(ii)  \; \sign (x) \cdot G'(x) \to +\infty \;\; \text{and} \;\;  G'(x)/x \to 0 \;\; \text{as} \;\; x \to \pm \infty.
\end{align*}
Despite it's harmless appearance, this question turned out to be a quite delicate matter. Whether some resonance phenomena occur, obviously does not only depend on the growth of $G$, but also on the properties of $p$ with respect to regularity and (quasi)-periodicity. The most investigated case is that of a time-periodic forcing $p$. The first affirmative contribution in that regard is due to Morris \cite{Morris_a_case_of_boundedness}, who showed the boundedness of all solutions to 
\begin{equation*}
\ddot x + 2x^3 = p(t),
\end{equation*}
where $p$ is continuous and periodic. Later, Dieckerhoff and Zehnder \cite{dieckerhoff_zehnder_boundedness_via_twisttheorem} were able to show the same for
\begin{equation*}
\ddot x + x^{2n+1} + \sum_{j=0}^{2n} p_j(t) x^{j} = 0,
\end{equation*}
where $n\in\NN$ and $p_j \in \mathcal{C}^{\infty}$ are $1$-periodic. In the following years, this result was improved by several authors (see \cite{Bin_1989}, \cite{Laederich_1991}, \cite{levi_quasiperiodic_motions},\cite{Norris_1992},\cite{levi_zehnder_quasiperiodic} and the references therein). If however the periodicity condition is dropped, Littlewood \cite{Littlewood_unbounded_solutions} himself showed that for any odd potential $G$ satisfying the super-/sublinearity condition there exists a bounded forcing $p$ leading to at least one unbounded trajectory. Later, Ortega \cite{Ortega_2005} was able to prove in a more general context that for any given $\mathcal{C}^2$-potential one can find an arbitrarily small $p\in \mathcal{C}^\infty$ such that most initial conditions (in the sense of a residual set) correspond to unbounded solutions of (\ref{duffing eq}). Even in the time-periodic case Littlewood \cite{Littlewood_unbounded_solutions_periodic_p} constructed $G \in \mathcal{C}^\infty$ and a periodic $p$ such that there is at least one unbounded solution. (Actually both \cite{Littlewood_unbounded_solutions} and \cite{Littlewood_unbounded_solutions_periodic_p} contain a computational mistake; see \cite{Levi_counterexample,Long} for corrections.) Let us also mention \cite{Zharnitsky_1997}, where Zharnitsky improved the latter result for the superlinear case such that the periodic $p$ can be chosen continuously. These counterexamples show that besides periodicity and regularity assumptions on $p$ an additional hypothesis on $G$ is needed if one hopes for boundedness of all solutions. Indeed, all positive results mentioned above suppose the monotone growth of $G'(x)/x$. This condition guarantees the monotonicity of the corresponding Poincaré map and thus enables the authors to use KAM theory. \\
We also want to point out the related problem of the so called Fermi-Ulam ``ping pong'' \cite{Fermi_on_the_origin,ulam1961}. The latter is a model for a particle bouncing elastically between periodically moving walls. In \cite{Laederich_1991}, it was first shown that for sufficiently regular motions the velocity of the particle stays bounded for all time and many results followed ever after.\\
In the last twenty years a wealth of works on the Littlewood boundedness problem has been published, including the sublinear, semilinear and other cases (see \cite{Kuepper_1999,Li_2001,Wang_2009,Liu_2009} and the references therein for some examples). Since those are far too many to be presented here, let us focus on the superlinear oscillator equation
\begin{equation} \label{DGL 0}
\ddot x + \lvert x \rvert^{\alpha-1}x = p(t),
\end{equation}
where $\alpha \geq 3$. In \cite{levi_zehnder_quasiperiodic}, Levi and Zehnder were able to show that for a quasi-periodic forcing $p$ all solutions are bounded, if the frequencies of $p$ satisfy a diophantine condition. In the present paper we shall omit this restriction on the frequencies and investigate the resulting long term behavior of solutions. But since in that case the tools related to invariant curve theorems are not available, one needs a different approach. In \cite{kunze_ortega_ping_pong}, Kunze and Ortega presented a technique applicable in the above situation. Using a refined version of Poincaré's recurrence theorem due to Dolgopyat \cite{dolgopyat}, they proved that under appropriate conditions almost all orbits of a certain successor map $f$ are recurrent. In particular, they used this theorem to show that quasi-periodic forcing functions $p$ lead to recurrent orbits in the Fermi-Ulam ``ping pong''. Here, we want to do the same for (\ref{DGL 0}). If $x(t)$ denotes a solution to this equation, we consider the map 
\begin{equation*}
\psi:(v_0,t_0)\mapsto(v_1,t_1),
\end{equation*}
which sends the time $t_0$ of a zero with negative derivative $v_0=\dot{x}(t_0)$ to the subsequent zero $t_1$ of this form and its corresponding velocity $v_1=\dot{x}(t_1)<0$. This map will be well defined for $\lvert v_0 \rvert$ sufficiently large, since in this case the corresponding solution oscillates quickly.
\begin{figure*}
	\begin{center}
		\begin{tikzpicture}[xscale=.5,yscale=.23]

		\draw[thick] (0 ,  -5.0000)
		(-0.0314  , -5.0000)--
		(-0.0628  , -4.9998)--
		(-0.0942  , -4.9996)--
		(-0.1257  , -4.9992)--
		(-0.1571  , -4.9988)--
		(-0.1885 ,  -4.9982)--
		(-0.2199 ,  -4.9975)--
		(-0.2513  , -4.9968)--
		(-0.2827  , -4.9958)--
		(-0.3141 ,  -4.9948)--
		(-0.3454,   -4.9935)--
		(-0.3768,   -4.9921)--
		(-0.4082 ,  -4.9906)--
		(-0.4395 ,  -4.9888)--
		(-0.4709 ,  -4.9868)--
		(-0.5022 ,  -4.9846)--
		(-0.5335 ,  -4.9822)--
		(-0.5648,   -4.9795)--
		(-0.5961 ,  -4.9765)--
		(-0.6273  , -4.9732)--
		(-0.6586   ,-4.9696)--
		(-0.6898,   -4.9656)--
		(-0.7210 ,  -4.9613)--
		(-0.7521  , -4.9565)--
		(-0.7832   ,-4.9514)--
		(-0.8143,   -4.9458)--
		(-0.8454 ,  -4.9399)--
		(-0.8764  , -4.9335)--
		(-0.9074 ,  -4.9266)--
		(-0.9384,   -4.9191)--
		(-0.9693 ,  -4.9111)--
		(-1.0002  , -4.9024)--
		(-1.0310 ,  -4.8931)--
		(-1.0617  , -4.8830)--
		(-1.0924,   -4.8722)--
		(-1.1230,   -4.8606)--
		(-1.1535,   -4.8481)--
		(-1.1840,   -4.8347)--
		(-1.2143,   -4.8204)--
		(-1.2446,   -4.8051)--
		(-1.2747,   -4.7887)--
		(-1.3048,   -4.7714)--
		(-1.3347,   -4.7529)--
		(-1.3645,   -4.7332)--
		(-1.3942,   -4.7124)--
		(-1.4237,   -4.6904)--
		(-1.4531,   -4.6672)--
		(-1.4824,   -4.6426)--
		(-1.5114,   -4.6167)--
		(-1.5403,   -4.5895)--
		(-1.5691,   -4.5609)--
		(-1.5976,   -4.5308)--
		(-1.6260,   -4.4993)--
		(-1.6541,   -4.4663)--
		(-1.6821,   -4.4318)--
		(-1.7098,   -4.3957)--
		(-1.7373,   -4.3580)--
		(-1.7645,   -4.3188)--
		(-1.7915,   -4.2779)--
		(-1.8182,   -4.2353)--
		(-1.8447,   -4.1910)--
		(-1.8708,   -4.1450)--
		(-1.8967,   -4.0972)--
		(-1.9223,   -4.0476)--
		(-1.9475,   -3.9963)--
		(-1.9725,   -3.9431)--
		(-1.9971,   -3.8881)--
		(-2.0213,   -3.8311)--
		(-2.0452,   -3.7723)--
		(-2.0687,   -3.7116)--
		(-2.0918,   -3.6489)--
		(-2.1145,   -3.5842)--
		(-2.1368,   -3.5176)--
		(-2.1587,   -3.4489)--
		(-2.1802,   -3.3783)--
		(-2.2012,   -3.3055)--
		(-2.2217,   -3.2307)--
		(-2.2418,   -3.1539)--
		(-2.2614,   -3.0750)--
		(-2.2805,   -2.9941)--
		(-2.2991,   -2.9112)--
		(-2.3172,   -2.8264)--
		(-2.3347,   -2.7398)--
		(-2.3517,   -2.6513)--
		(-2.3682,   -2.5610)--
		(-2.3840,   -2.4690)--
		(-2.3993,   -2.3754)--
		(-2.4140,   -2.2801)--
		(-2.4281,   -2.1832)--
		(-2.4415,   -2.0849)--
		(-2.4544,   -1.9850)--
		(-2.4665,   -1.8838)--
		(-2.4781,   -1.7812)--
		(-2.4890,   -1.6773)--
		(-2.4992,   -1.5723)--
		(-2.5087,   -1.4660)--
		(-2.5176,   -1.3587)--
		(-2.5258,   -1.2503)--
		(-2.5333,   -1.1410)--
		(-2.5400,   -1.0308)--
		(-2.5461,   -0.9197)--
		(-2.5515,   -0.8079)--
		(-2.5562,   -0.6954)--
		(-2.5602,   -0.5823)--
		(-2.5634,   -0.4687)--
		(-2.5659,   -0.3546)--
		(-2.5678,   -0.2402)--
		(-2.5688,   -0.1254)--
		(-2.5692,   -0.0105)--
		(-2.5689,    0.1047)--
		(-2.5678,    0.2198)--
		(-2.5660,    0.3350)--
		(-2.5660,    0.3350) --
		(-2.5635,0.45)--
		(-2.5603,0.5648)--
		(-2.5564,0.6794)--
		(-2.5517,0.7935)--
		(-2.5464,0.9072)--
		(-2.5403,1.0202)--
		(-2.5336,1.1326)--
		(-2.5261,1.2441)--
		(-2.5179,1.3547)--
		(-2.5091,1.4644)--
		(-2.4996,1.573)--
		(-2.4894,1.6805)--
		(-2.4785,1.7868)--
		(-2.4669,1.8919)--
		(-2.4548,1.9957)--
		(-2.4419,2.0981)--
		(-2.4285,2.1991)--
		(-2.4144,2.2986)--
		(-2.3996,2.3966)--
		(-2.3843,2.493)--
		(-2.3684,2.5878)--
		(-2.3519,2.681)--
		(-2.3348,2.7724)--
		(-2.3171,2.8621)--
		(-2.2989,2.9501)--
		(-2.2801,3.0362)--
		(-2.2607,3.1206)--
		(-2.2409,3.2031)--
		(-2.2205,3.2838)--
		(-2.1996,3.3626)--
		(-2.1782,3.4395)--
		(-2.1564,3.5145)--
		(-2.134,3.5876)--
		(-2.1112,3.6589)--
		(-2.088,3.7282)--
		(-2.0643,3.7956)--
		(-2.0402,3.8612)--
		(-2.0157,3.9248)--
		(-1.9908,3.9866)--
		(-1.9655,4.0466)--
		(-1.9399,4.1047)--
		(-1.9139,4.161)--
		(-1.8875,4.2155)--
		(-1.8609,4.2682)--
		(-1.8339,4.3192)--
		(-1.8066,4.3686)--
		(-1.779,4.4162)--
		(-1.7511,4.4622)--
		(-1.7229,4.5066)--
		(-1.6945,4.5495)--
		(-1.6658,4.5908)--
		(-1.6368,4.6307)--
		(-1.6076,4.6692)--
		(-1.5781,4.7062)--
		(-1.5485,4.7419)--
		(-1.5186,4.7763)--
		(-1.4884,4.8094)--
		(-1.4581,4.8413)--
		(-1.4276,4.872)--
		(-1.3969,4.9016)--
		(-1.366,4.93)--
		(-1.335,4.9574)--
		(-1.3037,4.9837)--
		(-1.2723,5.009)--
		(-1.2408,5.0334)--
		(-1.2091,5.0569)--
		(-1.1772,5.0795)--
		(-1.1452,5.1012)--
		(-1.1131,5.1222)--
		(-1.0809,5.1425)--
		(-1.0485,5.162)--
		(-1.016,5.1808)--
		(-0.9834,5.199)--
		(-0.9507,5.2167)--
		(-0.9178,5.2337)--
		(-0.8849,5.2503)--
		(-0.8518,5.2664)--
		(-0.8187,5.2821)--
		(-0.7855,5.2973)--
		(-0.7521,5.3123)--
		(-0.7187,5.3269)--
		(-0.6852,5.3411)--
		(-0.6516,5.3551)--
		(-0.6179,5.3687)--
		(-0.5842,5.3822)--
		(-0.5503,5.3954)--
		(-0.5164,5.4084)--
		(-0.4824,5.4212)--
		(-0.4483,5.4339)--
		(-0.4141,5.4465)--
		(-0.3798,5.459)--
		(-0.3455,5.4714)--
		(-0.3111,5.4837)--
		(-0.2766,5.496)--
		(-0.242,5.5082)--
		(-0.2073,5.5204)--
		(-0.1726,5.5325)--
		(-0.1378,5.5447)--
		(-0.1029,5.5568)--
		(-0.0679,5.5689)--
		(-0.0329,5.581)--
		(0.0023,5.5932)--
		(0.0375,5.6053)--
		(0.0727,5.6174)--
		(0.1081,5.6295)--
		(0.1435,5.6416)--
		(0.179,5.6536)--
		(0.2146,5.6656)--
		(0.2502,5.6776)--
		(0.2859,5.6895)--
		(0.3217,5.7013)--
		(0.3576,5.7131)--
		(0.3935,5.7247)--
		(0.4295,5.7362)--
		(0.4656,5.7476)--
		(0.5017,5.7588)--
		(0.5379,5.7697)--
		(0.5742,5.7805)--
		(0.6105,5.791)--
		(0.6469,5.8013)--
		(0.6834,5.8112)--
		(0.7199,5.8208)--
		(0.7565,5.83)--
		(0.7931,5.8387)--
		(0.8299,5.8472)--
		(0.8666,5.8552)--
		(0.9034,5.8628)--
		(0.9403,5.8698)--
		(0.9772,5.8763)--
		(1.0142,5.882)--
		(1.0512,5.887)--
		(1.0882,5.8912)--
		(1.1252,5.8945)--
		(1.1623,5.8968)--
		(1.1993,5.8981)--
		(1.2364,5.8983)--
		(1.2735,5.8973)--
		(1.3105,5.8951)--
		(1.3476,5.8916)--
		(1.3846,5.8868)--
		(1.4216,5.8805)--
		(1.4585,5.8726)--
		(1.4954,5.8632)--
		(1.5322,5.8522)--
		(1.5689,5.8394)--
		(1.6056,5.8248)--
		(1.6421,5.8084)--
		(1.6786,5.7901)--
		(1.7149,5.7698)--
		(1.751,5.7475)--
		(1.7871,5.723)--
		(1.8229,5.6963)--
		(1.8586,5.6674)--
		(1.8941,5.6361)--
		(1.9294,5.6024)--
		(1.9645,5.5663)--
		(1.9994,5.5277)--
		(2.034,5.4865)--
		(2.0683,5.4426)--
		(2.1023,5.396)--
		(2.1361,5.3466)--
		(2.1695,5.2943)--
		(2.2026,5.2392)--
		(2.2353,5.181)--
		(2.2677,5.1198)--
		(2.2996,5.0554)--
		(2.3312,4.9879)--
		(2.3623,4.9171)--
		(2.3929,4.843)--
		(2.4231,4.7655)--
		(2.4528,4.6846)--
		(2.482,4.6001)--
		(2.5106,4.5121)--
		(2.5387,4.4207)--
		(2.5662,4.3257)--
		(2.5931,4.2274)--
		(2.6194,4.1256)--
		(2.6451,4.0205)--
		(2.67,3.9119)--
		(2.6943,3.8)--
		(2.7179,3.6849)--
		(2.7407,3.5665)--
		(2.7627,3.4449)--
		(2.784,3.3201)--
		(2.8045,3.1923)--
		(2.8241,3.0616)--
		(2.8429,2.9279)--
		(2.8609,2.7913)--
		(2.8779,2.6521)--
		(2.8941,2.5102)--
		(2.9094,2.3658)--
		(2.9238,2.2189)--
		(2.9372,2.0697)--
		(2.9497,1.9184)--
		(2.9612,1.765)--
		(2.9718,1.6097)--
		(2.9814,1.4525)--
		(2.99,1.2938)--
		(2.9976,1.1335)--
		(3.0042,0.972)--
		(3.0098,0.8093)--
		(3.0144,0.6455)--
		(3.0179,0.481);
		\draw[thick] (3.0179,0.481)--
		(3.0204,0.3158)--
		(3.0219,0.1501)--
		(3.0223,-0.0158)--
		(3.0217,-0.1818)--
		(3.0201,-0.3476)--
		(3.0174,-0.5131)--
		(3.0137,-0.6781)--
		(3.009,-0.8423)--
		(3.0032,-1.0057)--
		(2.9964,-1.1681)--
		(2.9886,-1.3292)--
		(2.9798,-1.489)--
		(2.9699,-1.6472)--
		(2.9591,-1.8038)--
		(2.9473,-1.9586)--
		(2.9345,-2.1115)--
		(2.9208,-2.2622)--
		(2.9061,-2.4108)--
		(2.8904,-2.5571)--
		(2.8739,-2.7009)--
		(2.8564,-2.8422)--
		(2.8381,-2.9809)--
		(2.8189,-3.1168)--
		(2.7989,-3.2499)--
		(2.778,-3.38)--
		(2.7563,-3.5072)--
		(2.7338,-3.6313)--
		(2.7106,-3.7522)--
		(2.6867,-3.8699)--
		(2.662,-3.9844)--
		(2.6366,-4.0956)--
		(2.6105,-4.2035)--
		(2.5838,-4.3081)--
		(2.5564,-4.4093)--
		(2.5284,-4.5073)--
		(2.4998,-4.6019)--
		(2.4706,-4.6933)--
		(2.4408,-4.7813)--
		(2.4105,-4.866)--
		(2.3797,-4.9475)--
		(2.3484,-5.0258)--
		(2.3166,-5.1008)--
		(2.2843,-5.1726)--
		(2.2516,-5.2414)--
		(2.2185,-5.307)--
		(2.1849,-5.3696)--
		(2.151,-5.4292)--
		(2.1167,-5.486)--
		(2.082,-5.5398)--
		(2.0471,-5.5909)--
		(2.0118,-5.6392)--
		(1.9762,-5.6849)--
		(1.9403,-5.7281)--
		(1.9042,-5.7688)--
		(1.8678,-5.8072)--
		(1.8312,-5.8432)--
		(1.7944,-5.8771)--
		(1.7574,-5.9088)--
		(1.7201,-5.9385)--
		(1.6827,-5.9661)--
		(1.6452,-5.9918)--
		(1.6075,-6.0157)--
		(1.5696,-6.0378)--
		(1.5316,-6.0582)--
		(1.4935,-6.0771)--
		(1.4552,-6.0945)--
		(1.4169,-6.1104)--
		(1.3785,-6.1249)--
		(1.3399,-6.1381)--
		(1.3013,-6.1502)--
		(1.2626,-6.161)--
		(1.2239,-6.1708)--
		(1.1851,-6.1796)--
		(1.1462,-6.1874)--
		(1.1073,-6.1944)--
		(1.0684,-6.2005)--
		(1.0294,-6.2058)--
		(0.9904,-6.2105)--
		(0.9514,-6.2145)--
		(0.9123,-6.2179)--
		(0.8732,-6.2208)--
		(0.8341,-6.2231)--
		(0.795,-6.2251)--
		(0.7559,-6.2267)--
		(0.7168,-6.228)--
		(0.6776,-6.2289)--
		(0.6385,-6.2297)--
		(0.5994,-6.2302)--
		(0.5602,-6.2305)--
		(0.5211,-6.2307)--
		(0.482,-6.2307)--
		(0.4428,-6.2307)--
		(0.4037,-6.2306)--
		(0.3646,-6.2305)--
		(0.3254,-6.2303)--
		(0.2863,-6.2301)--
		(0.2472,-6.23)--
		(0.208,-6.2299)--
		(0.1689,-6.2299)--
		(0.1297,-6.2299)--
		(0.0906,-6.23)--
		(0.0514,-6.2301)--
		(0.0122,-6.2304);
		
		\begin{scope}[>=latex]
		\draw[->] (0,-7)--(0,7.5) node[left] {$\dot{x}$};
		\draw[->] (-4,0)--(4,0)node[below] {$x$};
		\end{scope}
		
		\begin{scope}[>=stealth]
		\draw[->,very thick] (-2.5692,   -0.0105)--(-2.5689,    0.1047) ;
		\draw[->,very thick] (3.0219,0.1501)--(3.0223,-0.0158) ;
		\end{scope}
		
		\node at(.5 ,  -5.0000)      {$v_0$};
		\node at (-.5,-6.2304)      {$v_1$};
		\pnt[black]{(0,-5)}{0.06}
		\pnt[black]{(0,-6.2304)}{0.06}
		\end{tikzpicture}
	\end{center}
	\caption{For large energies the trajectory spins clockwise around the origin}
\end{figure*}
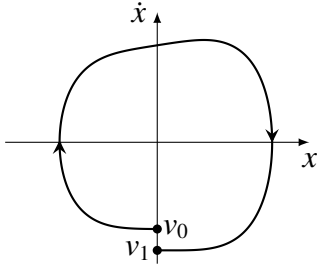
Defining the forward iterates $(v_n,t_n) = \psi^n(v_0,t_0)$ for $n\in \NN$, we study the escaping set
\begin{equation} \label{def escaping set}
E = \{ (v_0,t_0) : \lim_{n\to\infty} v_n = -\infty \}.
\end{equation}
Especially, for any solution $x$ such that $\lim_{t\to\infty} (\lvert x(t) \rvert + \lvert \dot{x}(t) \rvert) = \infty$ there is a time $t_0$ with $(\dot{x}(t_0),t_0) \in E$. 
Now, let us state the main result. Note however, that due to the vague definition of $\psi$ this formulation of the theorem will still be somewhat imprecise. A rigorous definition of the successor map $\psi$ and its domain $(-\infty,v_*)\times \RN$ will be given only in the last section. We refer the reader to this part and Theorem \ref{main theorem} below for the complete statement.
\begin{thm} \label{main thm short}
	Let $\bm{p}\in \mathcal{C}^4(\TT^N)$ generate the family of forcing functions
	\begin{equation} \label{def quasiperiodic p}
	p_{\bar{\Theta}}(t)=\bm{p}(\overline{\theta_1+t\omega_1}, \ldots, \overline{\theta_N+t\omega_N}), \;\; \bar{\Theta}=(\bar\theta_1,\ldots,\bar\theta_N)\in\TT^N,
	\end{equation}
	for fixed rationally independent frequencies $\omega_1,\ldots,\omega_N>0$. Let $(v_n,t_n)_{n \in \NN_0} = (\psi^n(v_0,t_0))_{n \in \NN_0}$ denote a generic complete forward orbit associated to system (\ref{DGL 0}) with $p$ replaced by $p_{\bar{\Theta}}$ and let $\mathcal{E}_{\bar{\Theta}}$ denote the corresponding escaping set (\ref{def escaping set}). Then, for almost all $\bar{\Theta} \in \TT^N$, the set $\mathcal{E}_{\bar{\Theta}}$ has Lebesgue measure zero.
\end{thm}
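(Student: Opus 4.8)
The plan is to follow the strategy of Kunze and Ortega \cite{kunze_ortega_ping_pong}: enlarge the phase space by the angular variables $\bar\Theta \in \TT^N$, so that the non-autonomous successor map $\psi$ becomes part of a single autonomous skew-product map $F$ on $(-\infty, v_*)\times\TT^N$ (or rather on a space where the time $t_0$ has been absorbed into the torus variables via $\bar\theta_j + t_0\omega_j$), and then apply Dolgopyat's quantitative recurrence theorem \cite{dolgopyat}. The first step is therefore to establish the asymptotic expansion of $\psi$ for large $\lvert v_0\rvert$: using the near-integrable structure of (\ref{DGL 0}) (action-angle-type variables for $\ddot x + \lvert x\rvert^{\alpha-1}x=0$, which is an autonomous oscillator with explicit period $\sim \lvert v_0\rvert^{-(\alpha-1)/(\alpha+1)}$ energy dependence), one derives $v_1 = v_0 + O(\lvert v_0\rvert^{-\gamma})$ and a companion estimate for the phase shift, with enough $\mathcal{C}^k$-control coming from $\bm p\in\mathcal C^4(\TT^N)$. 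This is the content that the final section must supply; here I would simply invoke it.

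Next I would verify the hypotheses of Dolgopyat's theorem for the extended map $F$. The key structural facts needed are: (i) $F$ preserves a measure that is (equivalent to) the product of Lebesgue measure in $v$ with Haar measure on $\TT^N$ — this should follow because $\psi$ is (up to the change of variables) a Poincaré-type return map of a flow preserving the standard symplectic/Liouville measure $dx\,d\dot x\,dt$, so that $dv\,dt$ is invariant, and the quasi-periodic lift preserves $dv\,d\bar\Theta$; (ii) a drift/oscillation estimate: the increments $v_{n+1}-v_n$ are small (of order $\lvert v_n\rvert^{-\gamma}$, hence summable-type control is \emph{not} available, but Dolgopyat's theorem is designed exactly for the borderline case), and more importantly the conditional expectation of the increment, averaged over the fast torus variable, must vanish or be suitably controlled — this is where rational independence of $\omega_1,\dots,\omega_N$ enters, guaranteeing equidistribution of the fast angles so that resonant accumulation of drift cannot occur for a.e.\ $\bar\Theta$; (iii) the requisite regularity and non-degeneracy (twist) of $F$ in the $v$-direction, again from the asymptotic expansion. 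Granting these, Dolgopyat's theorem yields that for a.e.\ $\bar\Theta$, almost every orbit of $F$ (equivalently of $\psi$) is recurrent, i.e.\ returns infinitely often to a fixed compact $v$-interval. Since a recurrent orbit cannot satisfy $v_n\to-\infty$, the escaping set $\mathcal E_{\bar\Theta}$ has measure zero for a.e.\ $\bar\Theta$.

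Finally, I would handle two loose ends. One is the passage from ``a.e.\ orbit of $F$ in the extended space is recurrent'' to ``for a.e.\ $\bar\Theta$, a.e.\ orbit in the original $(v_0,t_0)$-slice is recurrent'': this is a Fubini argument on $\TT^N \times (-\infty,v_*)\times$ (a fundamental domain for $t_0$), using that the measure-zero exceptional set in the product decomposes into a.e.\ fibre of measure zero. The other is that the successor map $\psi$ is genuinely well-defined and $\mathcal C^1$ on its domain for $\lvert v_0\rvert$ large — solutions with large initial speed do oscillate and return, with $\dot x$ strictly negative at the return — which is a standard comparison/continuity argument for (\ref{DGL 0}) and is presumably carried out in the final section; I would cite it.

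The main obstacle I expect is verifying Dolgopyat's hypotheses \emph{precisely} in this setting, in particular the interplay between the slow drift $v_1 - v_0 = O(\lvert v_0\rvert^{-\gamma})$ and the fast, only-rationally-independent (non-Diophantine) rotation on $\TT^N$: without a Diophantine condition one cannot use KAM-type normal forms (which is exactly why Levi--Zehnder's approach fails here), so one must instead extract, from the asymptotic expansion of $\psi$, an averaged equation whose error terms are controlled uniformly and whose ``diffusion'' term behaves well under Dolgopyat's framework. Pinning down the exponent $\gamma$ (which depends on $\alpha$) and checking that it lies in the admissible range of Dolgopyat's theorem, together with the requisite $\mathcal C^k$ bounds surviving only $\mathcal C^4$ regularity of $\bm p$, is the delicate technical heart of the argument.
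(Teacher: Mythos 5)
Your proposal has the right strategic skeleton — extend the phase space by the torus variables, package $\psi$ into a skew-product over the rotation on $\TT^N$, apply a Dolgopyat-type recurrence theorem, and transfer the resulting null set back to the $(v_0,t_0)$-plane — and this is indeed what the paper does, but there are two points where the proposal either misattributes the difficulty or passes over the step that actually carries the proof.

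First, and most importantly, the adiabatic-invariant estimate is not merely something one ``checks'' from the asymptotic expansion of $\psi$. After passing to action-angle variables and exchanging time with energy (the transformations $\mathcal{R}$ and $\mathcal{S}$ in Section~\ref{section transformations}), the natural drift of the action $I$ per return satisfies $I_1-I_0 = O(I_0^{(3-\alpha)/(2(\alpha+1))})$. For $\alpha>3$ this exponent is negative and you are done, but for the boundary case $\alpha=3$ (explicitly allowed in the statement) the exponent is $0$: the drift does \emph{not} decay, and the hypotheses of the Kunze--Ortega theorem (Theorem~\ref{thm escaping set nullmenge}) fail. What saves the argument is the additional canonical change of variables $\mathcal{T}$ of Theorem~\ref{thm ham transformation}, constructed via a generating function, which trades $I$ for a modified momentum $\mathcal{I}$ whose per-step drift is $O(\mathcal{I}^{b_\alpha})$ with $b_\alpha<0$ for \emph{all} $\alpha\geq 3$. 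This is a genuine extra idea (a one-step averaging/normal-form transformation), not a bookkeeping step, and it is the reason the theorem requires $\mathcal{C}^4$ rather than $\mathcal{C}^2$ regularity of $\bm{p}$. Your proposal gestures at this (``an averaged equation whose error terms are controlled'') but does not identify that a concrete symplectic transformation must be built, nor that without it the argument collapses exactly at $\alpha=3$.

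Second, the role you assign to rational independence — that the \emph{conditional expectation} of the drift, averaged over the fast angle, must vanish or nearly vanish — is not how the black-box theorem is applied here. The input that Theorem~\ref{thm escaping set nullmenge} needs is the deterministic, one-sided bound $W(f(\bar\Theta,r))\leq W(\bar\Theta,r)+k(r)$ with $k$ decreasing and tending to $0$; no cancellation in $\bar\Theta$ is required once $\mathcal{T}$ has been performed, and indeed the paper proves the two-sided bound $\lvert \mathcal{I}_1-\mathcal{I}_0\rvert\leq C\mathcal{I}_0^{b_\alpha}$ pointwise (Lemma~\ref{lem adiabatic invariant}). Rational independence enters in two other places: it ensures $\iota(\RN)$ is dense so that the torus-lifted functions $\bm{H}$, $\bm{R}$, $\bm{q}$, $\bm{H_1}$ are well-defined (Section~\ref{section quasi periodicity}, in particular Lemma~\ref{lem q quasip}); and it is used inside the proof of the Kunze--Ortega theorem itself, which you are treating as a black box. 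Relatedly, your claim that $dv\,dt$ is invariant under $\psi$ is unproven and is not what the paper establishes; the paper instead proves Liouville-type measure preservation for the Poincaré map $\Phi$ in the $(\varphi,\mathcal{I})$-coordinates and then transfers the null set back through $\mathcal{R}_0^{-1}\circ\mathcal{S}_0^{-1}\circ\mathcal{T}_0^{-1}$ by bounding Jacobians, not by an invariance argument in the original variables.
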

Here $\TT$ stands for the torus $\RN / \IN$ and $\mathcal{C}^k(\TT^N)$ denotes the space of functions $\bm{p}\in \mathcal{C}^k(\RN^N)$ that are $1$-periodic in each argument.
\begin{rmk}
	For $\alpha>3$ the same holds, if only $\bm{p} \in \mathcal{C}^2(\TT^N)$ is imposed. Just Theorem \ref{thm ham transformation} of the second section requires higher regularity and the latter is needed only for $\alpha=3$.
\end{rmk}
Let us give a short outline of the paper. As mentioned above, the theorem about escaping sets by Kunze and Ortega is the main tool used therein. It deals with certain quasi-periodic successor maps $(t_0,r_0) \mapsto (t_1,r_1)$ on $\RN\times (0,\infty)$ and associated maps $f(\bar{\Theta}_0,r_0) = (\bar{\Theta}_1,r_1)$ on $\TT^N \times (0,\infty)$. If one can find an adiabatic invariant-type function $W$, such that
\begin{equation*}
	W(f(\bar{\Theta},r)) \leq W(\bar{\Theta},r) + k(r),
\end{equation*}
where $k:(0,\infty) \to \RN$ is a decreasing and bounded function such that $\lim_{r\to\infty} k(r)=0$, then most orbits are recurrent. Section \ref{section escaping sets} is dedicated to presenting the terminology and general setup needed in order to state this theorem. For a more detailed description as well as the proof, we refer the reader to \cite{kunze_ortega_ping_pong}.\\
In the subsequent section we basically follow \cite{kunze_ortega_stability_estimates} by developing three diffeomorphisms that transform the underlying equation (\ref{DGL 0}) into a more convenient form suitable for the setup. First we restate the problem in terms of action-angle coordinates $(\bar{\vartheta},r)$ associated to the unperturbed system, as was already done by Morris \cite{Morris_a_case_of_boundedness}. Afterwards, we consider a transformation $\mathcal{S}$ by which the time $t=\phi$ and the old Hamiltonian become the new conjugate variables, whereas the old symplectic angle $\vartheta=\tau$ is chosen as the new independent variable. This trick was refined by Levi \cite{levi_quasiperiodic_motions} in the context of Littlewood's boundedness problem. In the third part of this section, we present a canonical change of variables $\mathcal{T}$ developed by Kunze and Ortega \cite{kunze_ortega_stability_estimates} that preserves the structure of the Hamiltonian, while making the new momentum coordinate $\mathcal{I}$ an adiabatic invariant in the sense of the estimate above. Those transformations can be shortly illustrated as follows:
\begin{equation*}
(x,\dot x;t) \overset{\mathcal{R}}{\rightarrow} (\bar{\vartheta},r;t) \hookrightarrow  (\vartheta,r;t) \overset{\mathcal{S}}{\rightarrow} (\phi,I;\tau) \overset{\mathcal{T}}{\rightarrow} (\varphi,\mathcal{I};\tau)
\end{equation*}
The estimate is confirmed in the next section, where we show that the time-$2\pi$ map $\Phi(\varphi_0,\mathcal{I}_0)$ of the resulting system defines an area-preserving successor map. Subsequently we prove that all three transformations retain the quasi-periodic structure if $p_{{\Theta}}(t)$ from (\ref{def quasiperiodic p}) is taken as the forcing function. This links the developed set of coordinates $(\varphi,\mathcal{I})$ to the setup of the second section. Finally, the map $\psi$ is defined properly, leading to a restatement of the main result. We end the paper with a proof of this theorem, where it is shown that initial conditions in $\mathcal{E}_{\bar{\Theta}}$ correspond to non-recurrent orbits $(\varphi_n,\mathcal{I}_n)_{n\in\NN}$.

\section{A theorem about escaping sets} \label{section escaping sets}
In this section we want to state the aforementioned theorem about escaping sets of Kunze and Ortega. But first we need to introduce some basic notation and terminology.

\subsection{Measure-preserving embeddings}
We will write $\TT=\RN / \IN$ for the standard torus and denote the class in $\TT^N$ corresponding to a vector $\Theta=(\theta_1,\ldots,\theta_N) \in \RN^N$ by $\bar{\Theta}=(\bar\theta_1,\ldots,\bar\theta_N)$, where $\bar\theta_j = \theta_j + \IN$. Similarly, $\bar{\vartheta}$ will indicate the class in $\mathbb{S}^1 = \RN / 2\pi\IN$ associated to some $\vartheta \in \RN$. Denote by $\mu_{\TT^N}$ the unique Haar measure such that $\mu_{\TT^N}(\TT^N)=1$.
From now on we will consider functions
\begin{equation*}
f:\mathcal{D}\subset \TT^N\times(0,\infty) \to \TT^N\times(0,\infty),
\end{equation*}
where $\mathcal{D}$ is an open set. We will call such a function \textit{measure-preserving embedding}, if $f$ is continuous, injective and furthermore
\begin{equation*}
(\mu_{\TT^N} \otimes \lambda) (f(\mathcal{B}))= (\mu_{\TT^N} \otimes \lambda) (\mathcal{B})
\end{equation*}
holds for all Borel sets $\mathcal{B}\subset \mathcal{D}$, where $\lambda$ denotes the Lebesgue measure on $\RN$. It is easy to show that under these conditions, $f:\mathcal{D} \to \tilde{\mathcal{D}}$ is a homeomorphism, where $\tilde{\mathcal{D}}=f(\mathcal{D})$. \\
Since we want to use the iterations of $f$, we have to carefully construct a suitable domain on which these forward iterations are well-defined. We initialize $\mathcal{D}_1 = \mathcal{D}, \;\; f^1=f$ and set
\begin{equation*}
\mathcal{D}_{n+1}=f^{-1}(\mathcal{D}_{n}), \;\; f^{n+1}= f^n \circ f \; \text{ for  }  \; n \in \NN.
\end{equation*}
This way $f^n$ is well-defined on $\mathcal{D}_n$. Clearly, $f^n$ is a measure-preserving embedding as well. Also, it can be shown inductively that $\mathcal{D}_{n+1}=\{(\bar{\Theta},r)\in\mathcal{D}:f(\bar{\Theta},r),\ldots,f^n(\bar{\Theta},r)\in \mathcal{D} \}$
and therefore $\mathcal{D}_{n+1}\subset \mathcal{D}_n \subset \mathcal{D}$ for all $n \in \NN$.
Initial conditions in the set
\begin{equation*}
\mathcal{D}_\infty = \bigcap\limits_{n=1}^\infty \mathcal{D}_n \subset \TT^N\times(0,\infty)
\end{equation*}
correspond to complete forward orbits, i.e. if $(\bar{\Theta}_0,r_0)\in \mathcal{D}_\infty$, then
\begin{equation*}
(\bar{\Theta}_n,r_n)=f^n(\bar{\Theta}_0,r_0)
\end{equation*}
is defined for all $n \in \NN$. It could however happen that $\mathcal{D}_\infty = \emptyset$ or $\mathcal{D}_{n} = \emptyset$ for some $n\geq 2$.

\subsection{Quasi-periodic functions} \label{subsection quasi-periodic functions}

Let $\omega_1,\ldots,\omega_N>0$ be rationally independent and consider the map
\begin{equation*}
\iota:\RN \to \TT^N, \;\; \iota(t)=(\overline{t\omega_1},\ldots,\overline{t\omega_N}).
\end{equation*}
For $N>1$ this homomorphism is injective and the image $\iota(\RN)\subset \TT^N$ is dense. If $N=1$, then $\iota$ is surjective. Moreover, for a fixed $\bar{\Theta} \in \TT^N$ we define the map
\begin{equation*}
\iota_{\bar{\Theta}}:\RN \to \TT^N, \;\; \iota_{\bar{\Theta}}(t)= \bar{\Theta} + \iota(t).
\end{equation*}
Let $\mathcal{C}^k(\TT^N)$ be the space of functions $\bm{u}:\RN^N\to\RN$, that are $1$-periodic in each argument and have continuous derivatives up to the $k$-th order. We will call a function $u:\RN \to \RN$ \textit{quasi-periodic (with frequency $\omega$)}, if there is a function $\bm{u} \in \mathcal{C}^0(\TT^N)$ such that 
\begin{equation*}
u(t)=\bm{u}(\iota(t)) \;\; \text{for all} \;\; t\in \RN.
\end{equation*}
Now, consider a measure-preserving embedding $f:\mathcal{D}\subset \TT^N\times(0,\infty) \to \TT^N\times(0,\infty)$, which has the special structure
\begin{equation} \label{embedding form}
f(\bar{\Theta},r)=(\bar{\Theta}+\iota(F(\bar{\Theta},r)),r + G(\bar{\Theta},r)),
\end{equation}
where $F,G:\mathcal{D}\to \RN$ are continuous. For $\bar{\Theta} \in \TT^N$ let
\begin{equation*}
{D}_{\bar{\Theta}} = (\iota_{\bar{\Theta}} \times \id)^{-1}(\mathcal{D}) \subset \RN \times (0,\infty).
\end{equation*}
On these open sets we define the maps $f_{\bar{\Theta}}: {D}_{\bar{\Theta}} \subset \RN \times (0,\infty) \to  \RN \times (0,\infty)$ given by
\begin{equation} \label{planar maps}
f_{\bar{\Theta}}(t,r)=(t+F(\bar{\Theta}+\iota(t),r),r+G(\bar{\Theta}+\iota(t),r)).
\end{equation}
Then $f_{\bar{\Theta}}$ is continuous and meets the identity
\begin{equation*}
f \circ (\iota_{\bar{\Theta}} \times \id) = (\iota_{\bar{\Theta}} \times \id) \circ f_{\bar{\Theta}} \;\; \text{on} \;\; D_{\bar{\Theta}},
\end{equation*} 
i.e. the following diagram is commutative:
\begin{equation*}
\begin{tikzcd}
\mathcal{D} \arrow[r, "f"] 
& \TT^N\times(0,\infty)  \\
{D}_{\bar{\Theta}} \arrow[r, "f_{\bar{\Theta}}"] \arrow[u, "\iota_{\bar{\Theta}} \times \id"]
& \RN \times (0,\infty) \arrow[u, "\iota_{\bar{\Theta}} \times \id"]
\end{tikzcd}
\end{equation*}
Therefore $f_{\bar{\Theta}}$ is injective as well. Again we define $D_{\bar{\Theta},1} = D_{\bar{\Theta}}$ and $D_{\bar{\Theta},n+1} = f_{\bar{\Theta}}^{-1}(D_{\bar{\Theta},n})$ to construct the set
\begin{equation*}
D_{\bar{\Theta}, \infty} = \bigcap\limits_{n=1}^{\infty} D_{\bar{\Theta},n} \subset \RN \times (0,\infty),
\end{equation*}
where the forward iterates $(t_n,r_n)=f_{\bar{\Theta}}^n(t_0,t_0)$ are defined for all $n\in\NN$. This set is equivalently defined through the relation
\begin{align*}
D_{\bar{\Theta}, \infty} =  (\iota_{\bar{\Theta}} \times \id)^{-1}(\mathcal{D}_\infty).
\end{align*}
Now we can define the \textit{escaping set}
\begin{equation*}
{E_{\bar{\Theta}}}= \{(t_0,r_0)\in{D}_{{\bar{\Theta}},\infty}: \lim_{n\to\infty}r_n=\infty  \}.
\end{equation*}
Finally we are in position to state the theorem \cite[Theorem~3.1]{kunze_ortega_ping_pong}:
\begin{thm} \label{thm escaping set nullmenge}
	Let $f:\mathcal{D}\subset \TT^N \times (0,\infty) \to \TT^N \times (0,\infty)$ be a measure-preserving embedding of the form (\ref{embedding form}) and suppose that there is a function $W=W(\bar{\Theta},r)$ satisfying $W\in \mathcal{C}^1(\TT^N \times (0,\infty))$,
	\begin{equation*}
	0<\beta \leq \p_r W(\bar{\Theta},r) \leq \delta \;\; \text{for} \;\; \bar{\Theta} \in \TT^N, \;\; r \in (0,\infty),
	\end{equation*}
	with some constants $\beta,\delta>0$, and furthermore
	\begin{equation} \label{ineq adiabatic inv}
	W(f(\bar{\Theta},r)) \leq W(\bar{\Theta},r) + k(r) \;\; \text{for} \;\; (\bar{\Theta},r)\in \mathcal{D},
	\end{equation}
	where $k:(0,\infty) \to \RN$ is a decreasing and bounded function such that $\lim_{r\to\infty} k(r)=0$. Then, for allmost all $\bar\Theta \in \TT^N$, the set $E_{\bar{\Theta}}\subset \RN \times (0,\infty)$ has Lebesgue measure zero.
\end{thm}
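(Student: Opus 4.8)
The plan is to follow the strategy of \cite{kunze_ortega_ping_pong}, from which the theorem is quoted. First I would replace the family of planar sets $E_{\bar\Theta}$ by the single \emph{torus escaping set}
\[
	\mathcal{E} = \bigl\{(\bar\Theta,r)\in\mathcal{D}_\infty : \lim_{n\to\infty} r_n = \infty \bigr\} \subset \TT^N\times(0,\infty)
\]
(which is Borel: each $\mathcal{D}_n$ is open, so $\mathcal{D}_\infty$ is $G_\delta$, and on it the divergence condition is a countable Boolean combination of the open sets $\{r_n>M\}$). By the commutativity of the diagram relating $f^n$ with $f_{\bar\Theta}^n$, together with the fact that $\iota_{\bar\Theta}\times\id$ preserves the $r$-coordinate, one has $E_{\bar\Theta}=(\iota_{\bar\Theta}\times\id)^{-1}(\mathcal{E})$ for every $\bar\Theta$. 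I claim that $(\mu_{\TT^N}\otimes\lambda)(\mathcal{E})=0$ already implies the theorem. Indeed, put $\Psi(\bar\Theta,t,r)=(\bar\Theta+\iota(t),r)$, so that $E_{\bar\Theta}$ is the $\bar\Theta$-slice of $\Psi^{-1}(\mathcal{E})$, and write $\mathcal{E}^{\,r}=\{\bar\Theta:(\bar\Theta,r)\in\mathcal{E}\}$. For fixed $r$, Fubini and the invariance of the Haar measure under the translations $\bar\Theta\mapsto\bar\Theta+\iota(t)$ give $(\mu_{\TT^N}\otimes\lambda_\RN)\bigl(\{(\bar\Theta,t):\bar\Theta+\iota(t)\in\mathcal{E}^{\,r}\}\bigr)=\int_\RN\mu_{\TT^N}(\mathcal{E}^{\,r})\,dt$, which is $0$ as soon as $\mu_{\TT^N}(\mathcal{E}^{\,r})=0$. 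Since $(\mu_{\TT^N}\otimes\lambda)(\mathcal{E})=0$ forces $\mu_{\TT^N}(\mathcal{E}^{\,r})=0$ for a.e.\ $r$, integrating in $r$ shows $\Psi^{-1}(\mathcal{E})$ is null for the product measure, and one more application of Fubini gives $\lambda(E_{\bar\Theta})=0$ for a.e.\ $\bar\Theta$.

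The second step uses $W$ to linearise the radial direction. The bounds $0<\beta\le\p_r W\le\delta$ make $r\mapsto W(\bar\Theta,r)$ a strictly increasing bi-Lipschitz bijection for each $\bar\Theta$, and since $W\in\mathcal{C}^1(\TT^N\times(0,\infty))$ and $\TT^N$ is compact, $W(\bar\Theta,r)$ is comparable to $r$ uniformly in $\bar\Theta$ for large $r$. Hence along any complete forward orbit one has $r_n\to\infty$ if and only if $W(f^n(\bar\Theta,r))\to\infty$, so $\mathcal{E}$ is precisely the set of points whose orbit fails to be recurrent for $W$, i.e.\ with $\liminf_n W(f^n(\bar\Theta,r))=\infty$; meanwhile (\ref{ineq adiabatic inv}) says $W$ is almost decreasing along orbits, with an error $k(r)$ that tends to $0$ as $r\to\infty$. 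It therefore remains to prove that this non-recurrent set is $(\mu_{\TT^N}\otimes\lambda)$-null.

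That is the crux and the main obstacle. Ordinary Poincaré recurrence is unavailable because $(\mu_{\TT^N}\otimes\lambda)(\TT^N\times(0,\infty))=\infty$; even the sharpest elementary bookkeeping — decomposing $\mathcal{E}\cap\{W<R\}$ by the last iterate that lies below level $R$ — only yields a disjoint union $\bigsqcup_{n\ge0}f^{-n}(\Gamma_R)$ with $\Gamma_R\subset\{R-K\le W<R\}$, $K=\sup k$, and measure-preservation then merely gives the divergent bound $\sum_n\mu(\Gamma_R)$, reducing the problem to the no-easier statement that $\Gamma_R$ is null. Dolgopyat's refined recurrence theorem \cite{dolgopyat} breaks this impasse by playing two structural features of the hypotheses against the infinite total mass: because $k(r)\to0$, the boundary layer $\{R-K\le W<R\}$ that an escaping orbit must traverse when it crosses level $R$ has $(\mu_{\TT^N}\otimes\lambda)$-measure at most $K/\beta$ \emph{uniformly in} $R$, so deep crossings are arbitrarily thin; and the special form (\ref{embedding form}), whose angular component is a translation of the compact torus $\TT^N$ by rationally independent frequencies, provides enough equidistribution that an orbit crossing a level upward must fall back below it, so that no positive-measure set of orbits can remain above every level forever. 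The proof of Dolgopyat's theorem is exactly the delicate calibration of the decay of $k$ against this angular averaging, and I would invoke it rather than reprove it; granting it, the non-recurrent set — hence $\mathcal{E}$ — is null, and by the first step the theorem follows.
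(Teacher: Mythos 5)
The paper does not actually prove this theorem: it is quoted verbatim from Kunze and Ortega (their Theorem~3.1), and the text immediately preceding the statement refers the reader to that paper for the proof. So there is no in-paper argument to compare against; your sketch, which ultimately invokes Dolgopyat's recurrence theorem as a black box, follows the same strategy the paper alludes to in its introduction.

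Your Fubini reduction to the single torus escaping set $\mathcal{E}$ is correct and is precisely what the structural assumption (\ref{embedding form}) is there to enable: the commutative square of Section~2.2, together with the fact that $\iota_{\bar{\Theta}}\times\id$ leaves the $r$-coordinate untouched, gives $E_{\bar{\Theta}}=(\iota_{\bar{\Theta}}\times\id)^{-1}(\mathcal{E})$, and averaging the translates $\bar{\Theta}+\iota(t)$ against Haar measure promotes $(\mu_{\TT^N}\otimes\lambda)(\mathcal{E})=0$ to $\lambda^2(E_{\bar{\Theta}})=0$ for almost every $\bar{\Theta}$. Your third paragraph, however, misdescribes the Dolgopyat mechanism in two respects. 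First, the boundary layers shrink because the \emph{local} step at level $R\approx W(\bar{\Theta},r)$ has size on the order of $k(r)$ and $k(r)\to 0$; the uniform bound $K/\beta$ you write down (with $K=\sup k$) is constant in $R$ and by itself gives nothing beyond the divergent bookkeeping you already dismissed. Second, the recurrence half of Dolgopyat's theorem runs on measure preservation of $f$ and the decay of $k$, not on equidistribution of the rationally independent angular translation --- that quasi-periodic structure in (\ref{embedding form}) is what underwrites the Fubini step, not the recurrence step. Since the paper also cites rather than proves the theorem, deferring the crux to Dolgopyat is a fair move, but you should be clear that the heuristic you offer in its place is not a substitute for that argument.
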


The function $W$ can be seen as a generalized adiabatic invariant, since any growth will be slow for large energies.

\section{Transformation of the problem} \label{section transformations}
For $\alpha \geq 3$, consider the second order differential equation 
\begin{equation} \label{DGL 1}
\ddot{x} + \lvert x \rvert^{\alpha-1}x = p(t),
\end{equation}
where $p\in \mathcal{C}_b^4(\RN)$ is a (in general) non periodic forcing function. Here, $\mathcal{C}_b^k(\RN)$ denotes the space of bounded functions with continuous and bounded derivatives up to order $k$.
Solutions $x(t)$ to (\ref{DGL 1}) are unique and exist for all time. To see this, we define
\begin{equation*}
E(t)= \frac{1}{2}\dot{x}(t)^2 + \frac{1}{\alpha +1}\lvert x(t) \rvert^{\alpha+1}.
\end{equation*}
Then $\lvert \dot{E} \rvert = \lvert p(t) \dot{x} \rvert \leq \lvert p(t) \rvert \sqrt{2E}$, and therefore
\begin{equation*}
\sqrt{E(t)} \leq \sqrt{E(t_0)} + \frac{1}{\sqrt{2}} \left\lvert \int_{t_0}^{t} \lvert p(s)\rvert ds \right\rvert.
\end{equation*}
So $E$ is bounded on finite intervals and thus $x$ can be continued on $\RN$.

\subsection{Action-angle coordinates}

First we want to reformulate (\ref{DGL 1}) in terms of the action-angle coordinates of the unperturbed system, that is
\begin{equation} \label{DGL unperturbed}
\ddot{x} + \lvert x \rvert^{\alpha-1}x = 0.
\end{equation}
The orbits of (\ref{DGL unperturbed}) are closed curves, defined by $\frac{1}{2}y^2 + \frac{1}{\alpha +1}\lvert x \rvert^{\alpha+1}=\text{const.}$ and correspond to periodic solutions. For $\lambda>0$ let $x_\lambda$ denote the solution of (\ref{DGL unperturbed}) having the initial values
\begin{equation*}
x_\lambda(0)=\lambda, \;\; \dot x_\lambda(0)=0.
\end{equation*}
Using the homogeneity of the problem, we get $x_\lambda(t)=\lambda x_1(\lambda^\frac{\alpha-1}{2} t)$. In particular $x_\lambda$ has a decreasing minimal period $T(\lambda) = \lambda^\frac{1-\alpha}{2} T(1)$. Thus we can find the unique number $\Lambda>0$, such that $T(\Lambda)=2\pi$. We will use the notation 
\begin{equation*}
c(t) = x_{\Lambda}(t), \;\; s(t)=\dot c (t),
\end{equation*}
since in a lot of ways these functions behave like the trigonometric functions $\cos$ and $\sin$: $c$ is even, $s$ is odd, and both are anti-periodic with period $\pi$. Hence they have zero mean value, i.e.
\begin{equation*}
\int_{0}^{2\pi} c(t) \, dt = \int_{0}^{2\pi} s(t) \, dt = 0.
\end{equation*}
In this case however, $(c(t),s(t))$ spins clockwise around the origin of the $(x,\dot{x})$-plane. Furthermore $c$ and $s$ meet the identity
\begin{equation} \label{Identi}
\frac{1}{2}s(t)^2 +\frac{1}{\alpha +1} \lvert c(t) \rvert^{\alpha+1} = \frac{1}{\alpha + 1} \Lambda^{\alpha + 1} \;\; \forall t \in \RN.
\end{equation}
%So all trajectories in the phase space can be parametrized by $\RN \ni t \mapsto (k c(t), k^\frac{\alpha+1}{2} s(t))$ with some $k\geq 0$. 
Now we can define a change of variables $\eta: \mathbb{S}^1\times(0,\infty) \to \RN^2\setminus\{0\}, \;  (\bar{\vartheta},r)\mapsto (x,v)$ by 
\begin{equation*} %\label{Transform}
x = \gamma r^\frac{2}{\alpha+3} c(\bar{\vartheta}), \;\; v = \gamma^\frac{\alpha+1}{2} r^\frac{\alpha+1}{\alpha +3} s(\bar{\vartheta}),
\end{equation*}
where $\gamma>0$ is determined by
\begin{equation*}
\gamma^\frac{\alpha+3}{2}\frac{2}{\alpha +3} \Lambda^{\alpha +1} = 1.
\end{equation*}
This choice of $\gamma$ makes $\eta$ a symplectic diffeomorphism, as can be shown by an easy calculation.
Moreover, from (\ref{Identi}) follows the identity 
\begin{equation} \label{eq energy action}
\frac{1}{2}v^2  +\frac{1}{\alpha +1} \lvert x \rvert^{\alpha+1} = \kappa_1 r^\frac{2(\alpha+1)}{\alpha+3},
\end{equation}
where $\kappa_1= \frac{1}{\alpha+1}(\gamma\Lambda)^{\alpha+1}$. Adding a new component for the time, we define the transformation map  
\begin{equation*}
\mathcal{R}:\RN^2\setminus\{0\}\times \RN \to \mathbb{S}^1\times(0,\infty) \times \RN, \;\; \mathcal{R}(x,v;t)=(\eta^{-1}(x,v);t).
\end{equation*}
Going back to the perturbed system, the old Hamiltonian 
\begin{equation*}
h (x,\dot x;t) = \frac{1}{2}\dot{x}^2  +\frac{1}{\alpha +1} \lvert x \rvert^{\alpha+1} - p(t)x
\end{equation*}
expressed in the new coordinates is
\begin{equation}
\mathcal{H}(\bar{\vartheta},r ;t) = \kappa_1 r^\frac{2(\alpha+1)}{\alpha+3} - \gamma r^\frac{2}{\alpha+3} p(t) c(\bar{\vartheta}).
\end{equation}
For simplicity's sake let us denote the lift of $\mathcal{H}$ onto $\RN \times (0,\infty) \times \RN$ by the same letter $\mathcal{H}$. The associated differential equations then become
\begin{equation} \label{DGL 3}
\begin{cases}
\dot{{\vartheta}} &= \p_r \mathcal{H} = \frac{2(\alpha+1)}{\alpha+3}\kappa_1 r^\frac{\alpha-1}{\alpha+3} - \frac{2}{\alpha+3}\gamma r^{-\frac{\alpha+1}{\alpha+3}} p(t) c({\vartheta}) \\
\dot r  &= - \p_{{\vartheta}} \mathcal{H} = \gamma r^\frac{2}{\alpha+3} p(t) s({\vartheta})
\end{cases}.
\end{equation}
It should be noted that solutions to (\ref{DGL 3}) only exist on intervals $J \subset \RN$, where $r(t)>0$. Therefore, we can only make assertions about solutions of the original problem (\ref{DGL 1}) defined on intervals, where $(x,\dot{x}) \neq 0$, when working with these action-angle coordinates.

\subsection{Time-energy coordinates}
In order to construct a measure preserving embedding one could take the Poincaré map of Hamiltonian system (\ref{DGL 3}). However, to fit the setting of subsection \ref{subsection quasi-periodic functions}, this map would need to have the time (and thus the quasi-periodic dependence of the system) as the first variable. Therefore, we will follow \cite{levi_quasiperiodic_motions} and take the time $t$ as the new ``position''-coordinate, the energy $\mathcal{H}$ as the new ``momentum'' and the angle $\vartheta$ as the new independent variable. 
\ \\
Since the first term in (\ref{DGL 3}) is dominant for $r\to \infty$ one can find $r_*$ such that
\begin{equation} \label{Mono abs}
\p_r \mathcal{H}(\vartheta,r;t) \geq 1 \;\; \text{for all} \;\; r\geq r_*.
\end{equation}
\begin{rmk} \label{remark constants}
	The value of $r_*$ depends upon $\alpha,\gamma, \kappa_1,\lVert c \rVert_{\mathcal{C}_b}$ and $\lVert p\rVert_{\mathcal{C}^4_b}$, where again $\gamma, \kappa_1,\lVert c \rVert_{\mathcal{C}_b}$ are uniquely determined by the choice of $\alpha$. We will call quantities depending only upon $\alpha$ and $\lVert p\rVert_{\mathcal{C}^4_b}$ constants. Let us also point out, that $r_*$ can be chosen ``increasingly in $\lVert p\rVert_{\mathcal{C}^4_b}$''. By this we mean, that if $r_*=r_*(\alpha,\lVert p\rVert_{\mathcal{C}^4_b})$ is the threshold corresponding to some $p \in \mathcal{C}^4_b(\RN)$, then (\ref*{Mono abs}) also holds for any forcing $\tilde{p}\in \mathcal{C}^4_b$ with $\lVert \tilde{p} \rVert_{\mathcal{C}^4_b} \leq \lVert p\rVert_{\mathcal{C}^4_b}$. Indeed, all thresholds we will construct have this property.
\end{rmk}
Now consider a solution $(\vartheta,r)$ of (\ref{DGL 3}) defined on an interval $J$, where $r(t)>r_*$ for all $t \in J$. Than the function $t\mapsto \vartheta(t)$ is invertible, since
\begin{equation*}
\dot\vartheta (t) = \p_r \mathcal{H}(\vartheta(t),r(t);t) \geq 1.
\end{equation*}
Adopting the notation of \cite{kunze_ortega_stability_estimates}, we will write $\tau=\vartheta(t)$ and denote the inverse by $\phi$, i.e. $\phi(\tau)=t$. Since $\vartheta(t)$ is at least of class $\mathcal{C}^2$, the same holds for the inverse function $\phi$ defined on $\vartheta(J)$. Let us now define 
\begin{equation*}
I(\tau)=\mathcal{H}(\tau,r(\phi(\tau));\phi(\tau)) \;\; \text{for} \; \; \tau \in \vartheta(J).
\end{equation*}
This function will be the new momentum. 
It is a well known fact that the resulting system is again Hamiltonian. To find the corresponding Hamiltonian, we can solve the equation
\begin{equation*}
\mathcal{H}(\vartheta,H;t)=I
\end{equation*}
implicitly for $H(t,I;\vartheta)$. Because of (\ref{Mono abs}) this equation admits a solution, which is well-defined on the open set
\begin{equation*}
\Omega=\{(t,I;\vartheta)\in \RN^3 : I > \mathcal{H}(\vartheta, r_* ;t)\}.
\end{equation*}
Indeed, by implicit differentiation it can be verified that
\begin{equation*}
\phi'=\p_I H, \;\; I'=-\p_\phi H,
\end{equation*}
where the prime $'$ indicates differentiation with respect to $\tau$.
Using the new coordinates, we have to solve
\begin{equation} \label{Impl Ham 1}
\kappa_1 H^\frac{2(\alpha+1)}{\alpha+3} - \gamma H^\frac{2}{\alpha+3} p(\phi) c(\tau) = I
\end{equation}
or equivalently
\begin{equation} \label{Impl Ham 2}
H = I^\frac{\alpha+3}{2(\alpha+1)}\kappa_1^{-\frac{\alpha+3}{2(\alpha+1)}} (1- \kappa_1^{-1} \gamma H^\frac{-2\alpha}{\alpha+3} p(\phi) c(\tau))^{-\frac{\alpha+3}{2(\alpha+1)}}.
\end{equation}
Since $p\in \mathcal{C}^6$ and $c\in \mathcal{C}^3$, also $H$ will be of class $\mathcal{C}^3$. Moreover, we can find $I_*>0$ (depending upon  $\alpha,\gamma, \kappa_1,\lVert c \rVert_{\mathcal{C}_b},\lVert p\rVert_{\mathcal{C}_b}$ and $r_*$) such that
\begin{equation*}
\{(\phi,I;\tau)\in \RN^3 : I\geq I_* \} \subset \Omega.
\end{equation*}
Furthermore, we can choose $I_*$ so large that the solution $H$ of (\ref{Impl Ham 1}) satisfies
\begin{equation*} %\label{abs H zwischen I}
\alpha_0  I^\frac{\alpha+3}{2(\alpha+1)} \leq H \leq \beta_0 I^\frac{\alpha+3}{2(\alpha+1)} \;\; \text{for} \;\; I\geq I_*
\end{equation*}
for some constants $\alpha_0,\beta_0>0$.
Let 
\begin{equation*}
\kappa_0=\kappa_1^{-\frac{\alpha+3}{2(\alpha+1)}}=\left(\frac{2(\alpha+1)}{\alpha+3}\gamma^\frac{1-\alpha}{2}\right)^\frac{\alpha+3}{2(\alpha+1)}.
\end{equation*}
To approximate the solution $H(\phi,I;\tau)$ of (\ref{Impl Ham 2}), one can use the Taylor polynomial of degree one for $(1-z)^{-\frac{\alpha+3}{2(\alpha+1)}}$ and then plug in the highest order approximation $\kappa_0 I^\frac{\alpha+3}{2(\alpha+1)}$ for the remaining $H$ on the right-hand side. Therefore we define the remainder function $R \in \mathcal{C}^3(G)$ through the relation
\begin{equation} \label{Ham 3}
H(\phi,I;\tau)=\kappa_0 I^\frac{\alpha+3}{2(\alpha+1)} + \frac{(\alpha+3)}{2(\alpha+1)}\gamma \kappa_0^\frac{\alpha+5}{\alpha + 3}  p(\phi) c(\tau) I^\frac{3-\alpha}{2(\alpha+1)} + R(\phi,I;\tau).
\end{equation}
The corresponding system is described by
\begin{equation} 
\begin{cases} \label{Ham 3 equations}
\phi' &= \p_I H = \kappa_0 \frac{\alpha+3}{2(\alpha+1)} I^\frac{1-\alpha}{2(\alpha+1)} + \frac{9-\alpha^2}{4(\alpha+1)^2}\gamma \kappa_0^\frac{\alpha+5}{\alpha+3} p(\phi)c(\tau) I^\frac{1-3\alpha}{2(\alpha+1)} + \p_I R,   \\
I' &= -\p_\phi H = -\frac{\alpha+3}{2(\alpha+1)} \gamma \kappa_0^\frac{\alpha+5}{\alpha+3} \dot p(\phi)c(\tau) I^\frac{3-\alpha}{2(\alpha+1)} - \p_\phi R.
\end{cases}
\end{equation}
The change of variables $(\vartheta,r;t)\mapsto(\phi,I;\tau)$ can be realized via the transformation map $\mathcal{S}:\RN \times [r_*,\infty) \times \RN \to \RN \times (0,\infty) \times \RN$ defined by
\begin{equation*}
\mathcal{S}(\vartheta,r;t) = (t,\mathcal{H}(\vartheta,r;t);\vartheta).
\end{equation*}
So $\mathcal{S}$ maps a solution $(\vartheta(t),r(t))$ of (\ref{DGL 3}) with the initial condition $(\vartheta(t_0),r(t_0))=(\vartheta_0,r_0)$ onto a solution $(\phi(\tau),I(\tau))$ of (\ref{Ham 3 equations}) with initial condition $(\phi(\vartheta_0),I(\vartheta_0))=(t_0,\mathcal{H}(\vartheta_0,r_0;t_0))$. \\
The following lemma by Kunze and Ortega \cite[Lemma~7.1]{kunze_ortega_stability_estimates} shows that $R$ is small in a suitable sense:
\begin{lem} \label{lem remainder}
 There are constants $C_0>0$ and $I_{C_0} \geq I_*>0$ (depending upon $\lVert p \rVert_{\mathcal{C}_b^2(\RN)}$) such that
 \begin{equation} \label{Remainder Abs}
 \lvert R \rvert + \lvert \p_\phi R \rvert + I\lvert \p_I R \rvert + \lvert \p^2_{\phi\phi} R \rvert + I\lvert \p^2_{\phi I} R \rvert + I^2\lvert \p^2_{II} R \rvert \leq C_0 I^\frac{3(1-\alpha)}{2(\alpha+1)}
 \end{equation}
 holds for all $\phi,\tau \in \RN$ and $I\geq I_{C_0}$.
\end{lem}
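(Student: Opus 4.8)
The plan is to desingularise the implicit equation (\ref{Impl Ham 1}) by the substitution $H=\kappa_0 I^{\sigma}w$ with $\sigma:=\tfrac{\alpha+3}{2(\alpha+1)}$. Since $\kappa_0=\kappa_1^{-\sigma}$ every power of $\kappa_0,\kappa_1$ collapses and every exponent of $I$ becomes integral: one has $\kappa_1\kappa_0^{2(\alpha+1)/(\alpha+3)}=1$, $I^{\sigma\cdot 2(\alpha+1)/(\alpha+3)}=I$, $I^{\sigma\cdot 2/(\alpha+3)}=I^{1/(\alpha+1)}$, so dividing (\ref{Impl Ham 1}) by $I$ reduces it to the scalar relation
\begin{equation*}
w^{a}-\zeta\,w^{b}=1,\qquad a=\tfrac{2(\alpha+1)}{\alpha+3},\quad b=\tfrac{2}{\alpha+3},\quad \zeta=\zeta(\phi,I;\tau)=\gamma\kappa_0^{b}\,p(\phi)\,c(\tau)\,I^{-\alpha/(\alpha+1)},
\end{equation*}
which depends on $(\phi,I,\tau)$ only through the single scalar $\zeta$. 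First I would apply the implicit function theorem at $(w,\zeta)=(1,0)$, where $\p_w(w^{a}-\zeta w^{b})=a\neq0$, to get a $\mathcal{C}^{\infty}$ solution branch $w=W(\zeta)$ on an interval $(-\zeta_0,\zeta_0)$ with $W(0)=1$ and, by implicit differentiation, $W'(0)=1/a=\sigma$. Then I would fix $I_{C_0}\geq I_*$ so large that $|\zeta|<\zeta_0$ for all $\phi,\tau\in\RN$ and $I\geq I_{C_0}$; this is possible because $p$ and $c$ are bounded. On this range $W$ and its derivatives up to order two are bounded.

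Comparing $H=\kappa_0 I^{\sigma}W(\zeta)$ with the expansion (\ref{Ham 3}) — using $\kappa_0^{1+b}=\kappa_0^{(\alpha+5)/(\alpha+3)}$ and $\sigma-\tfrac{\alpha}{\alpha+1}=\tfrac{3-\alpha}{2(\alpha+1)}$ — one checks that the coefficient of the term linear in $\zeta$ is exactly the explicit middle term of (\ref{Ham 3}). Hence
\begin{equation*}
R(\phi,I;\tau)=\kappa_0 I^{\sigma}\,g(\zeta),\qquad g(\zeta):=W(\zeta)-1-\sigma\zeta,
\end{equation*}
where $g\in\mathcal{C}^{\infty}((-\zeta_0,\zeta_0))$ vanishes to second order at $0$, so $|g(\zeta)|\leq C\zeta^{2}$, $|g'(\zeta)|\leq C|\zeta|$, $|g''(\zeta)|\leq C$ there. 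The bound (\ref{Remainder Abs}) is then a matter of chain-rule power-counting, governed by the single arithmetic identity
\begin{equation*}
\sigma-2\cdot\tfrac{\alpha}{\alpha+1}=\tfrac{3(1-\alpha)}{2(\alpha+1)},
\end{equation*}
i.e. $I^{\sigma}\zeta^{2}$ carries precisely the exponent on the right of (\ref{Remainder Abs}). For the undifferentiated term, $|R|\leq\kappa_0 I^{\sigma}|g(\zeta)|\leq C I^{\sigma}\zeta^{2}\leq C'\,I^{3(1-\alpha)/(2(\alpha+1))}$ since $|\zeta|\leq C I^{-\alpha/(\alpha+1)}$.

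For the $\phi$-derivatives one uses that $\zeta$ depends on $\phi$ only through $p(\phi)$, whose first two derivatives are bounded by $\lVert p\rVert_{\mathcal{C}^2_b(\RN)}$, so $\p_\phi\zeta$ and $\p^{2}_{\phi\phi}\zeta$ are again $O(I^{-\alpha/(\alpha+1)})$; since every differentiation that lands on $g$ rather than on a further factor of $\zeta$ lowers the vanishing order by one, one gets $\p_\phi[g(\zeta)],\,\p^{2}_{\phi\phi}[g(\zeta)]=O(\zeta^{2})$, whence $|\p_\phi R|+|\p^{2}_{\phi\phi}R|\leq C\,I^{\sigma}\zeta^{2}\leq C'\,I^{3(1-\alpha)/(2(\alpha+1))}$. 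For $I$-derivatives one notes that each $\p_I$ either hits the prefactor $I^{\sigma}$, lowering its exponent by $1$, or hits $\zeta$ through the factor $I^{-\alpha/(\alpha+1)}$, again producing an extra $I^{-1}$; in every case exactly one factor $I^{-1}$ appears, compensating the weights $I$ and $I^{2}$ in front of $|\p_I R|$ and $|\p^{2}_{II}R|$ in (\ref{Remainder Abs}); the mixed term $\p^{2}_{\phi I}R$ is handled the same way. Collecting the six estimates yields (\ref{Remainder Abs}) with $C_0$ and $I_{C_0}$ depending only on $\alpha$ and $\lVert p\rVert_{\mathcal{C}^2_b(\RN)}$ — only two derivatives in $\phi$ and $I$ enter, so $p\in\mathcal{C}^2_b$ indeed suffices.

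The step requiring the most care is the \emph{uniform} application of the implicit function theorem: one must verify that $\p_w(w^{a}-\zeta w^{b})=aw^{a-1}-b\zeta w^{b-1}$ stays bounded away from $0$ on a fixed neighbourhood of $w=1$ for all $|\zeta|<\zeta_0$, and quantify $\zeta_0$ and $I_{C_0}$ explicitly in terms of $\alpha$ and $\lVert p\rVert_{\mathcal{C}^2_b(\RN)}$; here the a priori two-sided bound $\alpha_0 I^{\sigma}\leq H\leq\beta_0 I^{\sigma}$ established above, i.e. boundedness of $w$ from above and below, is convenient. Once $g(0)=g'(0)=0$ and the power-counting identity are in hand, expanding $\p^{2}_{\phi I}[g(\zeta)]$ by the chain rule and checking that no term of order $\zeta^{0}$ or $\zeta^{1}$ survives is routine.
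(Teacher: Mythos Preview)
The paper does not prove this lemma; it is quoted verbatim as \cite[Lemma~7.1]{kunze_ortega_stability_estimates}. Your argument supplies a correct, self-contained proof. The substitution $H=\kappa_0 I^{\sigma}w$ reducing (\ref{Impl Ham 1}) to the one-parameter scalar equation $w^{a}-\zeta w^{b}=1$ is the right normalisation, the identification $R=\kappa_0 I^{\sigma}g(\zeta)$ with $g(\zeta)=W(\zeta)-1-\sigma\zeta$ vanishing to second order at $0$ is sound, and the arithmetic identity $\sigma-2\alpha/(\alpha+1)=3(1-\alpha)/(2(\alpha+1))$ indeed governs every term in the chain-rule expansion. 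The verification that $W'(0)=1/a=\sigma$, and hence that the linear term matches the explicit middle term of (\ref{Ham 3}), is correct.

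One small point of presentation: when you write ``$\partial^{2}_{\phi\phi}[g(\zeta)]=O(\zeta^{2})$'', the term $g''(\zeta)(\partial_\phi\zeta)^{2}$ uses only $g''=O(1)$, not a further order of vanishing of $g$; it is the factor $(\partial_\phi\zeta)^{2}$, which has the same $I$-decay as $\zeta^{2}$, that carries the weight. Your earlier sentence ``$\partial_\phi\zeta$ and $\partial^{2}_{\phi\phi}\zeta$ are again $O(I^{-\alpha/(\alpha+1)})$'' already contains the needed fact, so the bound is correct, but the phrase ``lowers the vanishing order by one'' slightly obscures which factor is doing the work. The same remark applies to the $I$-derivatives: in $I^{2}\,g''(\zeta)(\partial_I\zeta)^{2}$ one uses $g''=O(1)$ together with $I\partial_I\zeta=O(\zeta)$, not a higher-order vanishing of $g$. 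None of this affects the validity of the estimate.
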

Now we could use these coordinates and a corresponding Poincaré map for Theorem \ref{thm escaping set nullmenge}. But since it can be tricky to find a suitable function $W$, we would like the energy variable $I(\tau)$ itself to be an adiabatic invariant in the sense of (\ref{ineq adiabatic inv}) . However, for $\alpha = 3$ we do not have $I'\to0$ as $I\to \infty$. Therefore we have to do one further transformation. For $\alpha > 3$ this last step would not be necessary.

\subsection{A last transformation}

In \cite[Theorem~6.7]{kunze_ortega_stability_estimates} Ortega and Kunze constructed a change of coordinates, which reduces the power of the momentum variable in the second term of (\ref{Ham 3}) while preserving the special structure of the Hamiltonian. Since in their paper they had to use this transformation several times consecutively, the associated theorem is somewhat general and too complicated for our purpose here. Thus we will cite it only in the here needed form. \\
For $\mu>0$ we set
\begin{equation*}
\Sigma_{\mu}=\RN\times [\mu,\infty) \times \RN.
\end{equation*}

\begin{thm} \label{thm ham transformation}
	Consider the Hamiltonian $H$ from (\ref{Ham 3}), i.e.
	\begin{equation*}
	H(\phi,I;\tau)=\kappa_0 I^\frac{\alpha+3}{2(\alpha+1)} +  f(\phi) c(\tau)  I^\frac{3-\alpha}{2(\alpha+1)} + R(\phi,I;\tau),
	\end{equation*} 
	where $f(\phi)= \frac{(\alpha+3)}{2(\alpha+1)}\gamma \kappa_0^\frac{\alpha+5}{\alpha + 3} p(\phi)$, and $I_{C_0}$ from Lemma \ref{lem remainder}. Then there exists $I_{**}>I_{C_0}$, $\mathcal{I}_*>0$ and a $\mathcal{C}^1$-diffeomorphism 
	\begin{equation*}
	\mathcal{T}:\Sigma_{I_{**}} \to \mathcal{T}(\Sigma_{I_{**}} ) \subset \Sigma_{\mathcal{I}_*} , \;\;  (\phi,I;\tau)\mapsto (\varphi,\mathcal{I};\tau),
	\end{equation*}
	which transforms the system (\ref{Ham 3 equations}) into $\varphi'=\p_\mathcal{I}H_1, \mathcal{I}'=-\p_\varphi H_1$, where
	\begin{equation*}
	H_1(\varphi,\mathcal{I};\tau) = \kappa_0 \mathcal{I}^\frac{\alpha+3}{2(\alpha+1)} + f_1(\varphi)c_1(\tau)\mathcal{I}^{b_\alpha} + R_1(\varphi,\mathcal{I};\tau).
	\end{equation*}
	The new functions appearing in $H_1$ satisfy
	\begin{enumerate} [label=(\alph*)]
		\item $f_1(\varphi)= -\frac{\alpha+3}{2(\alpha+1)}\kappa_0 \dot f(\varphi) =   -\left(\frac{\alpha+3}{2(\alpha+1)}\right)^2 \gamma  \kappa_0^\frac{2\alpha+8}{\alpha+3} \dot p (\varphi)$,
		\item $c_1 \in \mathcal{C}^4(\RN), c_1'(\tau)=c(\tau), \int_{0}^{2\pi} c_1(\tau) \, d\tau=0,$
		\item $b_\alpha= -\frac{3\alpha^2 - 2 \alpha - 9}{2(\alpha +3)(\alpha+1)} < \frac{3-\alpha}{2(\alpha+1)} \leq 0$, and
		\item $R_1 \in \mathcal{C}^{3}(\Sigma_{\mathcal{I}_{*}})$ satisfies (\ref{Remainder Abs}) for all $\mathcal{I}\geq \mathcal{I}_*$ and with some constant $\tilde{C}_0>0$.
	\end{enumerate}
	The quantities $I_{**},\mathcal{I_{*}}$ and $\tilde{C}_0$ can be estimated in terms of $\alpha,\kappa_0, \lVert f \rVert_{\mathcal{C}_b^4(\RN)} , \lVert c \rVert_{\mathcal{C}_b(\RN)}$, and $C_0$ from Lemma \ref{lem remainder}. Furthermore, the change of variables $\mathcal{T}$ has the following properties:
	\begin{enumerate}[label=(\roman*)]
		\item $\mathcal{T}(\cdot,\cdot;\tau)$ is symplectic for all $\tau \in \RN$, i.e. $d\varphi \wedge d\mathcal{I} = d\phi \wedge dI$,
		\item $\mathcal{T}(\phi,I;\tau+2\pi) = \mathcal{T}(\phi,I;\tau) + (0,0;2\pi)$, and
		\item $I/2 \leq \mathcal{I}(\phi,I;\tau) \leq 2I$ for all $(\phi,I;\tau)$.
	\end{enumerate}
\end{thm}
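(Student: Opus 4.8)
\noindent\emph{Approach.} The statement is the specialization of \cite[Theorem~6.7]{kunze_ortega_stability_estimates} to a single application, so the plan is to check that the Hamiltonian $H$ from (\ref{Ham 3}) meets the hypotheses of that result and then read off the conclusion with the exponents specialized to the present powers; I would invoke it as a black box, spelling out the mechanism only for orientation. The data one feeds in are: the monotone principal part $\kappa_0 I^{(\alpha+3)/2(\alpha+1)}$, whose $I$-derivative is positive and bounded below for $I$ large; the single perturbation term $f(\phi)c(\tau)I^{(3-\alpha)/2(\alpha+1)}$, in which the oscillating factor $c$ is $2\pi$-periodic with vanishing mean $\int_0^{2\pi}c(t)\,dt=0$ (the structural fact recorded just after (\ref{Identi}), and the sole reason any reduction is possible); and the weighted smallness bound (\ref{Remainder Abs}) for $R$ supplied by Lemma \ref{lem remainder}. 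From these, \cite[Theorem~6.7]{kunze_ortega_stability_estimates} produces $\mathcal{T}$ together with (a)--(d) and (i)--(iii); in particular the exponent emerging from their construction is the one recorded in (c), and one checks it can be rewritten as $b_\alpha=\tfrac{1-\alpha}{2(\alpha+1)}+\tfrac{3-\alpha}{\alpha+3}$, which makes transparent that $\tfrac{3(1-\alpha)}{2(\alpha+1)}<b_\alpha<\tfrac{3-\alpha}{2(\alpha+1)}$, i.e. the new perturbation is strictly smaller than the old one yet still strictly above the remainder level, so it cannot merely be folded into $R_1$.

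The mechanism I would recall is a near-identity, $\tau$-dependent canonical change of variables designed to average out the oscillation of $c(\tau)$. One uses a generating function $S(\phi,\mathcal{I};\tau)=\phi\mathcal{I}+\chi(\phi,\mathcal{I};\tau)$ with $\chi$ of strictly lower order in $\mathcal{I}$ than the principal part and built from $f$ and the zero-mean antiderivative of $c$ (again $2\pi$-periodic because $\int_0^{2\pi}c(t)\,dt=0$, which gives property (b)); the implicit relations $I=\p_\phi S=\mathcal{I}+\p_\phi\chi$ and $\varphi=\p_{\mathcal{I}} S=\phi+\p_{\mathcal{I}}\chi$ define $\mathcal{T}$. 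Substituting into $H_1=H+\p_\tau S$, Taylor-expanding in the small correction, and choosing $\chi$ so that $\p_\tau\chi$ absorbs the $\mathcal{I}^{(3-\alpha)/2(\alpha+1)}$-term, one finds the principal part $\kappa_0\mathcal{I}^{(\alpha+3)/2(\alpha+1)}$ unchanged (as $\chi$ is of lower order), a residual perturbation of the strictly smaller order $\mathcal{I}^{b_\alpha}$ with amplitude proportional to $\dot f$ --- this is (a), $f_1=-\tfrac{\alpha+3}{2(\alpha+1)}\kappa_0\dot f$ --- carrying the oscillating factor $c_1$, plus higher-order leftovers that, together with $R$ composed with $\mathcal{T}$, constitute the new remainder. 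Properties (i)--(iii) then follow immediately: (i) is automatic for a map generated by $S$; (ii) holds because $\chi$ is $2\pi$-periodic in $\tau$; and (iii), $I/2\le\mathcal{I}\le 2I$, holds once $I_{**}$ is large enough that $\p_\phi\chi$ is a negligible fraction of $\mathcal{I}$ --- the same smallness, via the implicit function theorem, also makes $\mathcal{T}$ a $\mathcal{C}^1$-diffeomorphism on the whole of $\Sigma_{I_{**}}$ and yields the control of $I_{**},\mathcal{I}_*$ by $\alpha,\kappa_0,\lVert f\rVert_{\mathcal{C}^4_b},\lVert c\rVert_{\mathcal{C}_b}$ and $C_0$.

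The part I expect to be the genuine work --- and exactly what \cite{kunze_ortega_stability_estimates} carries out --- is (d): showing that the new remainder $R_1$ still satisfies (\ref{Remainder Abs}) with the \emph{same} exponent $\tfrac{3(1-\alpha)}{2(\alpha+1)}$ (only the constant changing) and in class $\mathcal{C}^3$. This requires collecting every term generated by the Taylor expansions of $\kappa_0(\mathcal{I}+\p_\phi\chi)^{(\alpha+3)/2(\alpha+1)}$ and of $f(\phi)c(\tau)(\mathcal{I}+\p_\phi\chi)^{(3-\alpha)/2(\alpha+1)}$, together with $R$ precomposed with $\mathcal{T}$, checking that each carries an $\mathcal{I}$-power at least as negative as $\tfrac{3(1-\alpha)}{2(\alpha+1)}$, and then repeating the bookkeeping for all partial derivatives up to second order in $(\varphi,\mathcal{I})$ --- which forces one to differentiate through the implicitly defined $\mathcal{T}$ and to use the bound (\ref{Remainder Abs}) for $R$ itself. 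In the write-up I would either cite this estimate verbatim from \cite{kunze_ortega_stability_estimates} or reproduce the relevant portion of their argument with the powers specialized to the case at hand; the $\mathcal{C}^3$-regularity of $R_1$ is then inherited from that of $R$ and of $\mathcal{T}$.
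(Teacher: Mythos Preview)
Your proposal is correct and is precisely the paper's approach: the paper does not prove this theorem but cites \cite[Theorem~6.7]{kunze_ortega_stability_estimates} as a black box, recording only the explicit generating function $\Psi(\phi,\mathcal{I};\tau)=-\mathcal{I}^{(3-\alpha)/2(\alpha+1)}f(\phi)c_1(\tau)$ (your $\chi$), the implicit relations $I=\mathcal{I}+\p_\phi\Psi$, $\varphi=\phi+\p_{\mathcal{I}}\Psi$, and $H_1=H+\p_\tau\Psi$. Your sketch of the mechanism, the role of the zero-mean antiderivative $c_1$, and the identification of (d) as the only substantive estimate is accurate and in fact more detailed than what the paper itself supplies.
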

Even if we omit the proof here, let us note that the change of variables can be realized via the generating function
\begin{equation*}
\Psi(\phi,\mathcal{I}; \tau) = -\mathcal{I}^\frac{3-\alpha}{2(\alpha+1)} f(\phi) c_1(\tau),
\end{equation*}
where $c_1$ is uniquely determined by the conditions in $(b)$. Therefore $\mathcal{T}$ is implicitly defined by the equations
\begin{equation*}
I = \mathcal{I} + \p_\phi \Psi, \;\; \varphi = \phi + \p_\mathcal{I} \Psi
\end{equation*}
and one can determine $H_1$ through the relation
\begin{equation*}
H_1(\varphi,\mathcal{I};\tau) = H(\phi,I;\tau) + \p_{\tau} \Psi(\phi,\mathcal{I};\tau).
\end{equation*}

\section{The successor map} \label{section successor map}

Consider the new Hamiltonian from Theorem \ref{thm ham transformation}, that is
\begin{equation*}
	H_1(\varphi,\mathcal{I};\tau) = \kappa_0 \mathcal{I}^\frac{\alpha+3}{2(\alpha+1)} + f_1(\varphi)c_1(\tau)\mathcal{I}^{b_\alpha} + R_1(\varphi,\mathcal{I};\tau),
\end{equation*}
which is well-defined on the set $\mathcal{T}(\Sigma_{I_{**}})$ and $2\pi$-periodic in the time variable $\tau$. The corresponding equations of motion are
\begin{align} \label{Ham 4 equations}
\begin{cases}
\varphi' &= \p_{\mathcal{I}} H_1 =  \kappa_0\frac{\alpha+3}{2(\alpha+1)}\mathcal{I}^{\frac{1-\alpha}{2(\alpha+1)}} 
+b_\alpha f_1(\varphi)c_1(\tau)\mathcal{I}^{b_\alpha -1} + \p_\mathcal{I} R_1, \\
\mathcal{I}'&= -\p_\varphi H_1 = -\dot f_1 (\varphi)  c_1(\tau)\mathcal{I}^{b_\alpha} - \p_\varphi R_1,
\end{cases}
\end{align}
where $\dot f_1 (\varphi) = - \left(\frac{\alpha+3}{2(\alpha+1)}\right)^2 \kappa_0^\frac{2\alpha+8}{\alpha+3} \gamma \ddot{p} (\varphi)$.\\
Now suppose $(\varphi_0,\mathcal{I}_0;\tau_0)\in \mathcal{T}(\Sigma_{I_{**}})$ and denote by $(\varphi(\tau;\varphi_0,\mathcal{I}_0,\tau_0),\mathcal{I}(\tau;\varphi_0,\mathcal{I}_0,\tau_0))$ the solution of (\ref{Ham 4 equations}) with the initial data
\begin{equation*}
\varphi(\tau_0)= \varphi_0, \;\; \mathcal{I}(\tau_0)= \mathcal{I}_0.
\end{equation*}
We want to construct a subset $\Sigma_{\mathcal{I}_{**}} = \RN \times [\mathcal{I}_{**},\infty) \times \RN \subset \mathcal{T}(\Sigma_{I_{**}})$ such that $(\varphi,\mathcal{I})$ is defined on the whole interval $[\tau_0,\tau_0+2\pi]$ whenever $(\varphi_0,\mathcal{I}_0,\tau_0)\in \Sigma_{\mathcal{I}_{**}}$. Similar to \cite[Lemma~4.1]{kunze_ortega_stability_estimates}, we state:
\begin{lem} \label{lem I abs}
	There exists a constant $\mathcal{I_{**}}>  \mathcal{I}_*$ (depending only upon $\alpha, \lVert f \rVert_{\mathcal{C}_b^4(\RN)} , \lVert c \rVert_{\mathcal{C}_b(\RN)}$ and $\tilde{C}_0$ from Theorem \ref{thm ham transformation}) such that $\Sigma_{\mathcal{I}_{**}} \subset \mathcal{T}(\Sigma_{I_{**}})$ and for any $(\varphi_0,\mathcal{I}_0,\tau_0) \in \Sigma_{\mathcal{I}_{**}} $ the solution $(\varphi,\mathcal{I})$ of (\ref{Ham 4 equations}) with initial data
	\begin{equation*}
	\varphi(\tau_0)= \varphi_0, \;\; \mathcal{I}(\tau_0)= \mathcal{I}_0
	\end{equation*}
	exists on $[\tau_0,\tau_0+2\pi]$, where it satisfies
	\begin{equation} \label{abs I aus lemma existenzint}
	\frac{\mathcal{I}_0}{4} \leq \mathcal{I}(\tau) \leq 4 \mathcal{I}_0  \;\; \text{for} \;\; \tau \in [\tau_0,\tau_0+2\pi].
	\end{equation}
\end{lem}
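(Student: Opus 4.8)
The plan is to pin down $\mathcal{I}_{**}$ by collecting a few size requirements and then to combine an a priori bound for $\mathcal{I}'$ with a standard continuation argument for the Hamiltonian system (\ref{Ham 4 equations}). I would first secure the inclusion $\Sigma_{\mathcal{I}_{**}}\subset\mathcal{T}(\Sigma_{I_{**}})$. Recall that $\mathcal{T}$ is defined implicitly by the generating function $\Psi(\phi,\mathcal{I};\tau)=-\mathcal{I}^{\frac{3-\alpha}{2(\alpha+1)}}f(\phi)c_1(\tau)$ via $I=\mathcal{I}+\p_\phi\Psi$ and $\varphi=\phi+\p_\mathcal{I}\Psi$. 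Since $\frac{3-\alpha}{2(\alpha+1)}\leq 0<1$ for $\alpha\geq 3$, the corrections $\p_\phi\Psi,\p_\mathcal{I}\Psi$ are bounded --- indeed they tend to $0$ as $\mathcal{I}\to\infty$ --- by a constant $M$ depending only on $\alpha$, $\lVert f\rVert_{\mathcal{C}_b^1(\RN)}$ and $\lVert c\rVert_{\mathcal{C}_b(\RN)}$. Hence for $\mathcal{I}\geq\mu_0:=I_{**}+M+1$ one can solve $\varphi=\phi+\p_\mathcal{I}\Psi(\phi,\mathcal{I};\tau)$ for $\phi$ (near-identity in $\phi$), and then $I=\mathcal{I}+\p_\phi\Psi\geq\mathcal{I}-M\geq I_{**}$, so $(\varphi,\mathcal{I};\tau)\in\mathcal{T}(\Sigma_{I_{**}})$. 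This gives $\Sigma_{\mu_0}\subset\mathcal{T}(\Sigma_{I_{**}})\subset\Sigma_{\mathcal{I}_*}$, and in particular $\mu_0\geq\mathcal{I}_*$.

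Next I would establish the a priori estimate. Let $(\varphi_0,\mathcal{I}_0,\tau_0)\in\Sigma_{\mathcal{I}_{**}}$ with $\mathcal{I}_{**}\geq 4\mu_0$ (to be enlarged once more below), and let $(\varphi,\mathcal{I})$ solve (\ref{Ham 4 equations}) with this data on its maximal interval of existence inside $\mathcal{T}(\Sigma_{I_{**}})$. On any $[\tau_0,\tau]\subset[\tau_0,\tau_0+2\pi]$ on which the solution is defined and $\mathcal{I}(s)\geq\mathcal{I}_0/4$, I would use the second equation of (\ref{Ham 4 equations}), the boundedness of $\dot f_1$ and $c_1$ (constants $C_1,C_2$ controlled by $\alpha$, $\lVert f\rVert_{\mathcal{C}_b^4(\RN)}$, $\lVert c\rVert_{\mathcal{C}_b(\RN)}$ via the formulas in Theorem~\ref{thm ham transformation}) and the bound $\lvert\p_\varphi R_1\rvert\leq\tilde C_0\,\mathcal{I}^{\frac{3(1-\alpha)}{2(\alpha+1)}}$ from property (d) of Theorem~\ref{thm ham transformation} with (\ref{Remainder Abs}) --- applicable since $\mathcal{I}(s)\geq\mathcal{I}_0/4\geq\mathcal{I}_{**}/4\geq\mathcal{I}_*$ --- to obtain
\begin{equation*}
\lvert\mathcal{I}'(s)\rvert\leq C_1C_2\,\mathcal{I}(s)^{b_\alpha}+\tilde C_0\,\mathcal{I}(s)^{\frac{3(1-\alpha)}{2(\alpha+1)}}\leq C_1C_2\Big(\tfrac{\mathcal{I}_0}{4}\Big)^{b_\alpha}+\tilde C_0\Big(\tfrac{\mathcal{I}_0}{4}\Big)^{\frac{3(1-\alpha)}{2(\alpha+1)}}=:m(\mathcal{I}_0),
\end{equation*}
where the last step uses that both exponents are strictly negative for $\alpha\geq 3$ --- namely $b_\alpha<\frac{3-\alpha}{2(\alpha+1)}\leq 0$ by property (c) of Theorem~\ref{thm ham transformation}, and $\frac{3(1-\alpha)}{2(\alpha+1)}<0$ --- so the corresponding powers are decreasing and may be evaluated at the smallest admissible value $\mathcal{I}_0/4$. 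Since both powers are negative, $m(\mathcal{I}_0)\to 0$ as $\mathcal{I}_0\to\infty$, so I would enlarge $\mathcal{I}_{**}$ so that $2\pi\,m(\mathcal{I}_0)\leq\mathcal{I}_0/4$ for all $\mathcal{I}_0\geq\mathcal{I}_{**}$. Integrating then gives $\lvert\mathcal{I}(\tau)-\mathcal{I}_0\rvert\leq 2\pi\,m(\mathcal{I}_0)\leq\mathcal{I}_0/4$, hence $\tfrac34\mathcal{I}_0\leq\mathcal{I}(\tau)\leq\tfrac54\mathcal{I}_0$.

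To finish I would run a continuity argument. Set
\begin{equation*}
\tau_1:=\sup\big\{\tau\in[\tau_0,\tau_0+2\pi]:(\varphi,\mathcal{I})\text{ exists on }[\tau_0,\tau]\text{ and }\tfrac{\mathcal{I}_0}{4}\leq\mathcal{I}(s)\leq 4\mathcal{I}_0\text{ for }s\in[\tau_0,\tau]\big\}.
\end{equation*}
Because $\mathcal{I}_0\geq\mathcal{I}_{**}\geq 4\mu_0>\mu_0$, the initial point admits an open neighbourhood inside $\Sigma_{\mu_0}\subset\mathcal{T}(\Sigma_{I_{**}})$ on which the right-hand side of (\ref{Ham 4 equations}) is of class $\mathcal{C}^1$, so Picard--Lindel\"of and continuity yield $\tau_1>\tau_0$. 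On $[\tau_0,\tau_1)$ the slab $\{\mathcal{I}_0/4\leq\mathcal{I}\leq 4\mathcal{I}_0\}$ is contained in $\Sigma_{\mu_0}\subset\mathcal{T}(\Sigma_{I_{**}})$, and there the right-hand side of (\ref{Ham 4 equations}) is bounded (the leading term of $\varphi'$ carries the negative power $\mathcal{I}^{\frac{1-\alpha}{2(\alpha+1)}}$, all other powers of $\mathcal{I}$ have a base bounded below, and $f_1,\dot f_1,c_1,\p_\varphi R_1,\p_\mathcal{I}R_1$ are bounded), so the solution cannot escape to infinity and the estimate above applies: $\tfrac34\mathcal{I}_0\leq\mathcal{I}(s)\leq\tfrac54\mathcal{I}_0$ on $[\tau_0,\tau_1)$, which persists at $\tau=\tau_1$ since $\mathcal{I}'$ is bounded. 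If $\tau_1<\tau_0+2\pi$, then $(\varphi(\tau_1),\mathcal{I}(\tau_1);\tau_1)$ again has $\mathcal{I}(\tau_1)\geq\tfrac34\mathcal{I}_0>\mu_0$ and lies in the interior of the slab, so the solution extends slightly beyond $\tau_1$ while staying in the slab, contradicting the definition of $\tau_1$. Hence $\tau_1=\tau_0+2\pi$, the solution is defined on all of $[\tau_0,\tau_0+2\pi]$, and $\tfrac34\mathcal{I}_0\leq\mathcal{I}(\tau)\leq\tfrac54\mathcal{I}_0$ there, which in particular implies (\ref{abs I aus lemma existenzint}).

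It remains to observe that every requirement imposed on $\mathcal{I}_{**}$ --- namely $\mathcal{I}_{**}\geq 4\mu_0$ and $2\pi\,m(\mathcal{I}_0)\leq\mathcal{I}_0/4$ for $\mathcal{I}_0\geq\mathcal{I}_{**}$ --- involves only $\alpha$, $\lVert f\rVert_{\mathcal{C}_b^4(\RN)}$, $\lVert c\rVert_{\mathcal{C}_b(\RN)}$, $\tilde C_0$ and the quantities $\mathcal{I}_*,I_{**}$, which are themselves of this type by Theorem~\ref{thm ham transformation}; so the final $\mathcal{I}_{**}$ is a constant in the asserted sense, and $\mathcal{I}_{**}>\mathcal{I}_*$ follows from $\mathcal{I}_{**}\geq 4\mu_0\geq 4\mathcal{I}_*$. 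I expect the decisive point --- and the reason the transformation $\mathcal{T}$ of Section~\ref{section transformations} had to be performed at all --- to be the strict sign $b_\alpha<0$ from property (c): it forces $\mathcal{I}'=O(\mathcal{I}^{b_\alpha})$ with a genuinely negative power, so that the variation of $\mathcal{I}$ over a time span of the fixed length $2\pi$ is $o(\mathcal{I}_0)$; for $\alpha=3$ the momentum $I$ of (\ref{Ham 3 equations}) has exponent $\frac{3-\alpha}{2(\alpha+1)}=0$ and no such decay. The only genuine bookkeeping obstacle is to keep the continuation confined to the intersection of $\{\mathcal{I}\geq\mathcal{I}_*\}$ (where (\ref{Remainder Abs}) is valid) with $\mathcal{T}(\Sigma_{I_{**}})$ (where $H_1$ is defined), and this is exactly what the inclusion $\Sigma_{\mu_0}\subset\mathcal{T}(\Sigma_{I_{**}})$ from the first step provides.
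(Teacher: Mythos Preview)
Your argument is correct and follows essentially the same route as the paper: an a priori bound on $\mathcal{I}'$ coming from the negative exponents $b_\alpha<0$ and $\tfrac{3(1-\alpha)}{2(\alpha+1)}<0$, combined with a maximal-interval continuation argument. The only notable difference is that the paper differentiates $\mathcal{I}^{1-b_\alpha}$ instead of $\mathcal{I}$, which makes the bound a \emph{constant} $\hat C$ (since $\mathcal{I}^{-b_\alpha}\cdot\mathcal{I}^{b_\alpha}=1$ and $\mathcal{I}^{-b_\alpha}\cdot\mathcal{I}^{3(1-\alpha)/2(\alpha+1)}$ has negative exponent) and thereby shortens the choice of $\mathcal{I}_{**}$; your direct bound $m(\mathcal{I}_0)$ works just as well and in fact yields the sharper window $\tfrac34\mathcal{I}_0\le\mathcal{I}\le\tfrac54\mathcal{I}_0$. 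Your treatment of the inclusion $\Sigma_{\mathcal{I}_{**}}\subset\mathcal{T}(\Sigma_{I_{**}})$ via the generating function is more explicit than the paper's one-line appeal to property~(iii).
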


\begin{proof}
	Suppose $\mathcal{I}_0 \geq \mathcal{I}_{**}\geq 4 \mathcal{I}_{*}$, then $(iii)$ from Theorem \ref{thm ham transformation} yields $\Sigma_{\mathcal{I}_{**}} \subset \mathcal{T}(\Sigma_{I_{**}})$. Now, let $T>0$ be maximal such that $\mathcal{I}_0/4 \leq \mathcal{I}(\tau) \leq 4 \mathcal{I}_0$ holds for all $\tau \in [\tau_0,\tau_0 + T)$. On this interval we have
	\begin{equation*}
	(\mathcal{I}^{1-b_\alpha})'=(1-b_\alpha)\mathcal{I}^{-b_\alpha}\mathcal{I}' = (1-b_\alpha)\mathcal{I}^{-b_\alpha}(-\dot f_1 (\varphi)  c_1(\tau)\mathcal{I}^{b_\alpha} - \p_\varphi R_1)
	\end{equation*}
	and thus
	\begin{equation*}
	\lvert (\mathcal{I}^{1-b_\alpha})' \rvert 
	\leq \lvert (1-b_\alpha)  \rvert \left( \lVert \dot f_1 \rVert_{\mathcal{C}_b} \; \lVert c_1 \rVert_{\mathcal{C}_b} +\lvert \p_\varphi R_1 \rvert \mathcal{I}^{-b_\alpha} \right)
	\leq \lvert (1-b_\alpha)  \rvert \left( \lVert \dot f_1 \rVert_{\mathcal{C}_b} \; \lVert c_1 \rVert_{\mathcal{C}_b} + \tilde{C}_0 \right) = \hat{C},
	\end{equation*}
	with $\tilde{C}_0>0$ from $(d)$ of Theorem \ref{thm ham transformation}, since $b_\alpha = -\frac{3\alpha^2 - 2 \alpha - 9}{2(\alpha +3)(\alpha+1)} > \frac{3(1-\alpha)}{2(\alpha+1)}$. Now assume $T\leq 2\pi$, then for $\mathcal{I}_{**}$ sufficiently large we conclude
	\begin{equation*}
	\left(\frac{\mathcal{I}_0}{2}\right)^{1-b_\alpha} \leq \mathcal{I}_0^{1-b_\alpha} - 2\pi \hat{C} \leq \mathcal{I}(\tau)^{1-b_\alpha} \leq  \mathcal{I}_0^{1-b_\alpha} + 2\pi \hat{C} \leq  (2\mathcal{I}_0)^{1-b_\alpha}
	\end{equation*}
	on the whole interval $[\tau_0,\tau_0 + T)$. This contradicts the definition of $T$ and thus completes the proof.
\end{proof}
We can therefore consider the Poincaré map $\Phi:\RN \times [\mathcal{I}_{**},\infty) \to \RN^2$ corresponding to the periodic system (\ref{Ham 4 equations}), defined by
\begin{equation} \label{Def successor}
\Phi(\varphi_0,\mathcal{I}_0) = (\varphi(5\pi/2;\varphi_0,\mathcal{I}_0,\pi/2),\mathcal{I}(5\pi/2;\varphi_0,\mathcal{I}_0,\pi/2)).
\end{equation}
The choice $\tau_0=\frac{\pi}{2}$ is basically due to computational advantages, since $c(\vartheta)=0$ if and only if $\vartheta = \pi/2 + m\pi$ with $m \in \IN$. Moreover, values of $\tau = \vartheta$ in $\pi/2 + 2\pi \IN$ correspond exactly to those zeros of the solution $x(t)$, where $\dot{x}<0$.
\begin{equation*}
\Phi(\varphi_0,\mathcal{I}_0) =  (\varphi_1,\mathcal{I}_1) .
\end{equation*}
Now that we have defined a suitable successor map, we can prove that $\mathcal{I}$ is an adiabatic invariant in the sense of equation (\ref{ineq adiabatic inv}):
\begin{lem} \label{lem adiabatic invariant}
	There is a constant $C>0$ (depending only upon $\alpha, \lVert f \rVert_{\mathcal{C}_b^4(\RN)} , \lVert c \rVert_{\mathcal{C}_b(\RN)}$  and $\tilde{C}_0$) such that 
	\begin{equation*}
	\lvert \mathcal{I}_1 - \mathcal{I}_0 \rvert \leq C \mathcal{I}_0^{b_\alpha}
	\end{equation*}
	holds for all $(\varphi_0,\mathcal{I}_0) \in \RN \times [\mathcal{I}_{**},\infty)$.
\end{lem}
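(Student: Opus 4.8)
The plan is to integrate the $\mathcal{I}$-equation of (\ref{Ham 4 equations}) over one full period and estimate the resulting integral by brute force, using the a priori control on $\mathcal{I}$ supplied by Lemma \ref{lem I abs}. Writing $(\varphi(\tau),\mathcal{I}(\tau))$ for the solution with $\varphi(\pi/2)=\varphi_0$, $\mathcal{I}(\pi/2)=\mathcal{I}_0$, the fundamental theorem of calculus and (\ref{Def successor}) give
\begin{equation*}
\mathcal{I}_1 - \mathcal{I}_0 = \int_{\pi/2}^{5\pi/2} \mathcal{I}'(\tau)\,d\tau = -\int_{\pi/2}^{5\pi/2} \bigl( \dot f_1(\varphi(\tau)) c_1(\tau)\, \mathcal{I}(\tau)^{b_\alpha} + \p_\varphi R_1(\varphi(\tau),\mathcal{I}(\tau);\tau)\bigr)\,d\tau .
\end{equation*}
Here we take $\mathcal{I}_{**}$ at least as large as in Lemma \ref{lem I abs} (and $\mathcal{I}_{**}\geq 4$), so that the orbit is defined on all of $[\pi/2,5\pi/2]$ and satisfies $\mathcal{I}_0/4 \leq \mathcal{I}(\tau) \leq 4\mathcal{I}_0$ there.

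For the first term I would use $b_\alpha<0$: then $\tau\mapsto\mathcal{I}(\tau)^{b_\alpha}$ is largest where $\mathcal{I}$ is smallest, so $\mathcal{I}(\tau)^{b_\alpha}\leq(\mathcal{I}_0/4)^{b_\alpha}=4^{-b_\alpha}\mathcal{I}_0^{b_\alpha}$, and combining with $\lVert\dot f_1\rVert_{\mathcal{C}_b}$ and $\lVert c_1\rVert_{\mathcal{C}_b}$ (controlled respectively by $\lVert f\rVert_{\mathcal{C}_b^4}$ via $(a)$ and by $\lVert c\rVert_{\mathcal{C}_b}$ via $(b)$ of Theorem \ref{thm ham transformation}) bounds the first integrand by a constant times $\mathcal{I}_0^{b_\alpha}$. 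For the remainder term I would invoke $(d)$ of Theorem \ref{thm ham transformation}, i.e. estimate (\ref{Remainder Abs}) for $R_1$, which yields $\lvert\p_\varphi R_1\rvert\leq\tilde C_0\,\mathcal{I}^{3(1-\alpha)/(2(\alpha+1))}$; since $b_\alpha>\frac{3(1-\alpha)}{2(\alpha+1)}$ (as already noted in the proof of Lemma \ref{lem I abs}) and $\mathcal{I}\geq\mathcal{I}_{**}\geq1$, we get $\mathcal{I}^{3(1-\alpha)/(2(\alpha+1))}\leq\mathcal{I}^{b_\alpha}\leq4^{-b_\alpha}\mathcal{I}_0^{b_\alpha}$, so this term too is $O(\mathcal{I}_0^{b_\alpha})$. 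Integrating over the interval of length $2\pi$ gives
\begin{equation*}
\lvert\mathcal{I}_1-\mathcal{I}_0\rvert \leq 2\pi\,4^{-b_\alpha}\bigl(\lVert\dot f_1\rVert_{\mathcal{C}_b}\lVert c_1\rVert_{\mathcal{C}_b}+\tilde C_0\bigr)\,\mathcal{I}_0^{b_\alpha} =: C\,\mathcal{I}_0^{b_\alpha},
\end{equation*}
with $C$ depending only on $\alpha$, $\lVert f\rVert_{\mathcal{C}_b^4}$, $\lVert c\rVert_{\mathcal{C}_b}$ and $\tilde C_0$.

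I expect no serious obstacle here: once Lemma \ref{lem I abs} is in hand, the argument is a one-line integration followed by routine $L^\infty$ bounds on the right-hand side of the $\mathcal{I}$-equation. The only delicate points are (i) enlarging $\mathcal{I}_{**}$ so that both the conclusion of Lemma \ref{lem I abs} applies and $\mathcal{I}_{**}\geq1$, which lets the smaller power of $\mathcal{I}$ coming from $R_1$ be absorbed into $\mathcal{I}^{b_\alpha}$, and (ii) checking that $C$ genuinely depends only on the listed quantities. I would stress that \emph{no} cancellation in the oscillatory integral is needed at this stage — a crude absolute-value estimate suffices; the finer structure of $\int_{\pi/2}^{5\pi/2}\dot f_1(\varphi)c_1(\tau)\,d\tau$, and in particular the mean-zero property $\int_0^{2\pi}c_1=0$ of $(b)$, will only be used later in constructing the adiabatic invariant $W$ and verifying (\ref{ineq adiabatic inv}).
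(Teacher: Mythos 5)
Your proof is correct and takes essentially the same route as the paper: integrate the $\mathcal{I}$-equation over $[\pi/2,5\pi/2]$, use the a priori bound $\mathcal{I}_0/4\leq\mathcal{I}(\tau)\leq4\mathcal{I}_0$ from Lemma~\ref{lem I abs}, absorb the remainder estimate $\lvert\p_\varphi R_1\rvert\leq\tilde C_0\mathcal{I}^{3(1-\alpha)/(2(\alpha+1))}$ into $\mathcal{I}^{b_\alpha}$ using $b_\alpha>\tfrac{3(1-\alpha)}{2(\alpha+1)}$, and conclude by a crude $L^\infty$ bound with the same constant $C=2\pi\,4^{-b_\alpha}\bigl(\lVert\dot f_1\rVert_{\mathcal{C}_b}\lVert c_1\rVert_{\mathcal{C}_b}+\tilde C_0\bigr)$. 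Your closing aside that the mean-zero property of $c_1$ will be used ``later in constructing $W$'' is a slight mischaracterization—the paper just takes $W(\bar\Theta,\mathcal{I})=\mathcal{I}$, so the mean-zero condition is already exhausted in building $\mathcal{T}$ in Theorem~\ref{thm ham transformation}—but this has no bearing on the correctness of your argument for the lemma.
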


\begin{proof}
	With a similar reasoning like in the proof of Lemma \ref{lem I abs} we get
	\begin{align*}
	\lvert \mathcal{I}' (\tau) \rvert &= \lvert -\dot f_1 (\varphi)  c_1(\tau)\mathcal{I}^{b_\alpha}(\tau) - \p_\varphi R_1 \rvert
	\leq  \lVert \dot f_1 \rVert_{\mathcal{C}_b} \; \lVert c_1 \rVert_{\mathcal{C}_b} \mathcal{I}(\tau)^{b_\alpha} + \tilde{C}_0  \mathcal{I}(\tau)^{ \frac{3(1-\alpha)}{2(\alpha+1)}} \\
	&\leq  \left( \lVert \dot f_1 \rVert_{\mathcal{C}_b} \; \lVert c_1 \rVert_{\mathcal{C}_b}  + \tilde{C}_0 \right)\mathcal{I}(\tau)^{b_\alpha} 
	\leq \left( \lVert \dot f_1 \rVert_{\mathcal{C}_b} \; \lVert c_1 \rVert_{\mathcal{C}_b}  + \tilde{C}_0 \right) 4^{-b_\alpha}  \mathcal{I}_0^{b_\alpha}.
	\end{align*}
	Now integrating over $[\pi/2,5\pi/2]$ gives us
	\begin{equation*}
		\lvert \mathcal{I}_1 - \mathcal{I}_0 \rvert \leq 2\pi \left( \lVert \dot f_1 \rVert_{\mathcal{C}_b} \; \lVert c_1 \rVert_{\mathcal{C}_b}  + \tilde{C}_0 \right) 4^{-b_\alpha}  \mathcal{I}_0^{b_\alpha}.
	\end{equation*}
\end{proof}

\section{Quasi-periodicity} \label{section quasi periodicity}

So far all our considerations have dealt with the case of a general forcing function $p \in \mathcal{C}^{4}_b(\RN)$. Now we will replace $p(t)$ by $p_{\bar{\Theta}}(t)$ from (\ref{def quasiperiodic p}) and show that the quasi-periodicity is inherited by the Hamiltonian system (\ref{Ham 4 equations}). But first let us clarify some notation. \\
In this section, we will mark continuous functions with an argument in $\TT^N$ with bold letters. Each such function $\bm{u}$ gives rise to a family of quasi-periodic maps $\{u_{\bar{\Theta}}\}_{\bar{\Theta}\in \TT^N}$ via the relation
\begin{equation*}
u_{\bar{\Theta}}(t)=\bm{u}(\iota_{\bar{\Theta}}(t)).
\end{equation*}
Note that since $\iota_{\bar{\Theta}}(\RN)$ lies dense in $\TT^N$, the function $\bm{u}\in \mathcal{C}(\TT^N)$ is also uniquely determined by this property. Moreover, for $\bm{u}\in \mathcal{C}^1(\TT^N)$ we introduce the notation $\p_\omega = \sum_{i=1}^{N} \omega_i \frac{\p}{\p \theta_i}$, so that
\begin{equation*}
\frac{d}{d t} u_{\bar{\Theta}}(t)= \p_\omega \bm{u}(\iota_{\bar{\Theta}}(t)).
\end{equation*}
So let us plug the forcing $p_{\bar{\Theta}}(t)$ associated to $\bm{p}$ from the main theorem into (\ref{DGL 1}). Then $p_{\bar{\Theta}} \in \mathcal{C}^4_b(\RN)$, because
\begin{equation*}
\lVert p_{\bar{\Theta}} \rVert_{\mathcal{C}_b^4(\RN)} \leq \max(1, \lVert \omega \rVert^4_\infty)  \lVert \bm{p} \rVert_{\mathcal{C}^4(\TT^N)}
\end{equation*}
holds. Therefore all results of section \ref{section transformations} are applicable. Considering Remark \ref{remark constants}, this also implies that we can find new constants $r_*, I_*$ etc.\@ depending only upon $\alpha, \omega$ and $\lVert \bm{p} \rVert_{\mathcal{C}^4(\TT^N)}$ such that corresponding estimates hold uniformly in $\bar{\Theta} \in \TT^N$.\\
Since $\mathcal{R}$ and $\mathcal{S}$ basically leave the time variable $t$ unchanged, it is straightforward to prove that the transformation to action-angle coordinates as well as the change to the time-energy coordinates $(\phi,I)$, preserves the quasi-periodic structure (cf. \cite[p.~1242]{levi_zehnder_quasiperiodic}). Thus we find functions 
\begin{equation*}
\bm{H},\bm{R}:\TT^N \times [I_*,\infty) \times \RN \to \RN \times [I_*,\infty) \times \RN
\end{equation*}
of class $\mathcal{C}^{3}$ depending on $\bm{p}$ such that 
\begin{equation*}
H(\phi,I;\tau) = \bm{H}(\iota_{\bar{\Theta}}(\phi),I;\tau), \;\; R(\phi,I;\tau) = \bm{R}(\iota_{\bar{\Theta}}(\phi),I;\tau).
\end{equation*}
holds for every $\bar{\Theta}\in \TT^N$.
\begin{rmk}
	Let us note, that the functions $H,R$ etc.\@ now depend on the choice of ${\bar{\Theta}}$. Thus it would be more precise to write $H_{\bar{\Theta}},R_{\bar{\Theta}}$ and so on, but for reasons of clarity we will omit the index throughout this section. The functions $\bm{H},\bm{R}$ etc.\@ on the other hand are uniquely determined by $\bm{p}$.
\end{rmk}
However it requires a bit more work, to see that also the transformation $\mathcal{T}$ defined in Theorem \ref{thm ham transformation} retains the quasi-periodic properties. We recall that this change of variables is defined by
\begin{equation*}
I = \mathcal{I} + \p_\phi \Psi, \;\; \varphi = \phi + \p_\mathcal{I} \Psi,
\end{equation*}
where $\Psi(\phi,\mathcal{I};\tau) = - \mathcal{I}^\frac{3-\alpha}{2(\alpha+1)}f(\phi)c_1(\tau)$.
Let $\bm{f} \in \mathcal{C}^4(\TT^N)$ be the function one obtains by replacing $p$ in the definition of $f$ by $\bm{p}$.
Inspired by \cite[p.~261]{Moser_siegel_celmechanics}, we can prove the following lemma:
\begin{lem} \label{lem q quasip}
	Suppose $\mathcal{I}_{**} \geq  2\pi\max (1, \lVert \omega \rVert^4_\infty)\lVert \bm{f} \rVert_{\mathcal{C}^4(\TT^N)} \lVert c \rVert_{\mathcal{C}_b(\RN)}$ and consider
	\begin{equation*}
	\varphi(\phi,\mathcal{I};\tau) = \phi - \frac{3-\alpha}{2(\alpha+1)} \mathcal{I}^{\frac{1-3\alpha}{2(\alpha+1)}} c_1(\tau) f(\phi),
	\end{equation*}
	then for $\mathcal{I}\geq \mathcal{I}_{**},\tau \in \RN$ the inverse can be written in the form
	\begin{equation*}
	\phi(\varphi,\mathcal{I};\tau) = \varphi + q(\varphi,\mathcal{I};\tau),
	\end{equation*}
	where $q(\varphi,\mathcal{I};\tau) = \bm{q}(\iota_{\bar{\Theta}}(\varphi),\mathcal{I};\tau)$ with $\bm{q}\in \mathcal{C}^{3}(\TT^N\times [\mathcal{I}_{**},\infty)\times \RN)$.
\end{lem}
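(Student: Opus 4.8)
The plan is to invert the relation $\varphi = \phi + p(\phi)$, where here I abbreviate $p(\phi) = -\frac{3-\alpha}{2(\alpha+1)} \mathcal{I}^{\frac{1-3\alpha}{2(\alpha+1)}} c_1(\tau) f(\phi)$, by a standard implicit-function / fixed-point argument carried out \emph{on the torus}, so that the quasi-periodic dependence is visible from the outset. Concretely, since $f(\phi) = \bm{f}(\iota_{\bar\Theta}(\phi))$ with $\bm f \in \mathcal{C}^4(\TT^N)$, I would first observe that the map $\varphi \mapsto \phi$ we seek is equivalent to solving, for fixed $\mathcal{I} \geq \mathcal{I}_{**}$ and $\tau \in \RN$,
\begin{equation*}
q = -\frac{3-\alpha}{2(\alpha+1)} \mathcal{I}^{\frac{1-3\alpha}{2(\alpha+1)}} c_1(\tau)\, \bm{f}\big(\iota_{\bar\Theta}(\varphi + q)\big)
\end{equation*}
for $q = q(\varphi,\mathcal{I};\tau)$. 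The right-hand side, as a function of $q$, has derivative bounded (using $b_\alpha$-type exponents, $\lVert c_1 \rVert_{\mathcal{C}_b} \leq 2\pi \lVert c \rVert_{\mathcal{C}_b}$, and $\lVert \p_\omega \bm f \rVert \leq \lVert \omega \rVert_\infty \lVert \bm f \rVert_{\mathcal{C}^4(\TT^N)}$) by a quantity that is $\leq 1/2$ once $\mathcal{I}_{**}$ exceeds the stated threshold; hence the map is a uniform contraction in $q$, and the Banach fixed-point theorem produces a unique solution.

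Next I would promote this to the torus picture. The point is that the contraction mapping depends on $\varphi$ only through $\iota_{\bar\Theta}(\varphi) \in \TT^N$, so one may instead solve
\begin{equation*}
\bm q = -\frac{3-\alpha}{2(\alpha+1)} \mathcal{I}^{\frac{1-3\alpha}{2(\alpha+1)}} c_1(\tau)\, \bm{f}\big(\bar\Phi + \iota(\bm q)\big), \quad \bar\Phi \in \TT^N,
\end{equation*}
which by the same contraction estimate has a unique solution $\bm{q} = \bm{q}(\bar\Phi,\mathcal{I};\tau)$; setting $\bm q$'s value at $\bar\Phi = \iota_{\bar\Theta}(\varphi)$ recovers $q(\varphi,\mathcal{I};\tau)$ by uniqueness, giving $q(\varphi,\mathcal{I};\tau) = \bm q(\iota_{\bar\Theta}(\varphi),\mathcal{I};\tau)$. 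Finally, smoothness of $\bm q$: the defining equation is $\Psi$-type smooth in all variables ($c_1 \in \mathcal{C}^4$, $\bm f \in \mathcal{C}^4$, and the power of $\mathcal{I}$ is $\mathcal{C}^\infty$ on $[\mathcal{I}_{**},\infty)$, while $\iota:\RN \to \TT^N$ is smooth), and the derivative in $\bm q$ of $\bm q \mapsto (\text{RHS})$ has norm $\leq 1/2 < 1$, so $\id - D_{\bm q}(\text{RHS})$ is invertible; the implicit function theorem on the Banach manifold $\TT^N \times [\mathcal{I}_{**},\infty) \times \RN$ then yields $\bm q \in \mathcal{C}^3$ (one order is lost compared to the $\mathcal{C}^4$ data through the way $\varphi$ enters, matching the regularity already available for $\bm H, \bm R$ in this section).

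The main obstacle is bookkeeping the constants so that the single explicit threshold $\mathcal{I}_{**} \geq 2\pi\max(1,\lVert\omega\rVert_\infty^4)\lVert\bm f\rVert_{\mathcal{C}^4(\TT^N)}\lVert c\rVert_{\mathcal{C}_b(\RN)}$ genuinely suffices to make the contraction constant $\leq 1/2$ — this requires checking that $\big|\frac{3-\alpha}{2(\alpha+1)}\big| \leq \tfrac12$ (true for $\alpha \geq 3$, since then $\frac{3-\alpha}{2(\alpha+1)} \in (-\tfrac12, 0]$), that $\mathcal{I}^{\frac{1-3\alpha}{2(\alpha+1)}} \leq \mathcal{I}_{**}^{\frac{1-3\alpha}{2(\alpha+1)}} \leq 1/\mathcal{I}_{**}$ for the relevant exponent range, and that the factor $\lVert \p_\omega \bm f \rVert_{\mathcal{C}_b} \leq \lVert\omega\rVert_\infty \lVert \bm f\rVert_{\mathcal{C}^4}$ and $\lVert c_1\rVert_{\mathcal{C}_b} \leq 2\pi\lVert c\rVert_{\mathcal{C}_b}$ combine exactly into the displayed bound — everything else (existence, uniqueness, smoothness) is the routine contraction-mapping/implicit-function machinery, so I would keep that part terse.
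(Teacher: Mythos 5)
Your proposal is correct and reaches the same conclusion, but it takes a genuinely different route from the paper. The paper follows Moser's continuation trick (cited from \cite{Moser_siegel_celmechanics}): after passing to the torus, it introduces a homotopy parameter $\sigma\in[0,1]$ in the fixed-point equation for $\bm q$, differentiates in $\sigma$ to obtain a non-autonomous ODE
\begin{equation*}
	\frac{d\bm q}{d\sigma}=\chi(\bar\Omega+\iota(\bm q);\sigma),\qquad \bm q(\bar\Omega;0)=0,
\end{equation*}
checks that the explicit vector field $\chi$ has a denominator uniformly bounded away from zero because of the size of $\mathcal I_{**}$, and then invokes existence, uniqueness and $\mathcal{C}^3$-dependence of solutions of ODEs on initial data and parameters, finally evaluating the flow at $\sigma=1$. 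You instead treat the equation $\bm q = -\sigma A(\bar\Phi+\iota(\bm q))$ at $\sigma=1$ directly as a fixed-point/implicit equation, establishing the contraction estimate from the same threshold on $\mathcal I_{**}$ and then appealing to the contraction mapping theorem for existence/uniqueness and to the implicit function theorem (in charts on $\TT^N$) for regularity. Both routes are standard and equally valid here; yours is a bit more elementary and arguably more direct, while the paper's continuation ODE is the form in which the argument is packaged in the Moser reference.

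Two small remarks. First, there is a sign slip in your abbreviation: with $\varphi=\phi+p(\phi)$ and $p(\phi)=-\frac{3-\alpha}{2(\alpha+1)}\mathcal I^{\frac{1-3\alpha}{2(\alpha+1)}}c_1(\tau)f(\phi)$, the equation for $q=\phi-\varphi$ is $q=-p(\varphi+q)=+\frac{3-\alpha}{2(\alpha+1)}\mathcal I^{\frac{1-3\alpha}{2(\alpha+1)}}c_1(\tau)f(\varphi+q)$, not with an extra minus sign; this is harmless since only $\lvert\frac{3-\alpha}{2(\alpha+1)}\rvert$ enters the contraction estimate. Second, the claim that ``one order is lost'' is not accurate for your IFT argument: the implicit equation $G(\bar\Phi,\mathcal I,\tau,q)=q-A(\bar\Phi+\iota(q),\mathcal I,\tau)=0$ is itself $\mathcal C^4$ (both $\bm f$ and $c_1$ are $\mathcal C^4$, the power of $\mathcal I$ is smooth, and $\iota$ is linear), and the invertibility condition $\lvert\p_\omega A\rvert<1$ holds uniformly, so the implicit function theorem actually returns $\bm q\in\mathcal C^4$. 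It is the paper's continuation argument that genuinely drops one order, because the ODE vector field $\chi$ contains $\p_\omega\bm f$ and is therefore only $\mathcal C^3$. Since the lemma only asserts $\mathcal C^3$, both statements suffice, but you need not concede the loss.
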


\begin{proof}
	First consider the function $w(\phi,\varphi,\mathcal{I},\tau)=\phi - \frac{3-\alpha}{2(\alpha+1)} \mathcal{I}^{\frac{1-3\alpha}{2(\alpha+1)}} c_1(\tau) f(\phi) - \varphi$, where $\mathcal{I}\geq \mathcal{I}_{**}$. Since $\mathcal{I}_{**} \geq \lVert f \rVert_{\mathcal{C}_b^4(\RN)}  \lVert c_1 \rVert_{\mathcal{C}_b(\RN)}$ we have $\p_\phi w \geq  \frac{1}{(\alpha+1)}$. Since also $w \in \mathcal{C}^{4}(\RN^2\times [\mathcal{I}_{**},\infty)\times \RN)$ the equation $w=0$ defines a unique solution $\phi(\varphi,\mathcal{I};\tau)$ of class $\mathcal{C}^{4}$.\\	
	If there was such a function $q$ we should have
	\begin{equation} \label{gleichung def q}
	q(\varphi,\mathcal{I};\tau)  = \phi -\varphi = \frac{3-\alpha}{2(\alpha+1)} \mathcal{I}^{\frac{1-3\alpha}{2(\alpha+1)}} c_1(\tau) f(\varphi + q(\varphi,\mathcal{I};\tau)).
	\end{equation}
	Let us now fix some $\mathcal{I}\geq \mathcal{I}_{**}$ and $\tau \in \RN$. For simplicity's sake we will drop the dependence of this variables and thus write $q(\varphi)$ for $q(\varphi,\mathcal{I};\tau)$ etc. Since $f(\phi)=\bm{f}(\iota_{\bar{\Theta}}(\phi))$, equation (\ref{gleichung def q}) translates into
	\begin{equation*}
	\bm{q}(\bar{\Omega}) 	- \frac{3-\alpha}{2(\alpha+1)} \mathcal{I}^{\frac{1-3\alpha}{2(\alpha+1)}} c_1(\tau) \bm{f}(\bar{\Omega} + \iota(\bm{q}(\bar{\Omega})))  = 0 \;\; \text{for all} \;\; \bar{\Omega}\in \TT^N.
	\end{equation*}
	We instead consider the equation
	\begin{equation*}
	\bm{q} 	-  \sigma \left( \frac{3-\alpha}{2(\alpha+1)} \mathcal{I}^{\frac{1-3\alpha}{2(\alpha+1)}} c_1(\tau) \bm{f}(\bar{\Omega} + \iota(\bm{q}))  \right)= 0, \;\; \sigma\in[0,1] 
	\end{equation*}
	and search for a solution $\bm{q}(\bar{\Omega}; \sigma)$. Differentiation with respect to $\sigma$ yields the ordinary differential equation
	\begin{equation*}
	\frac{d \bm{q}}{d \sigma} = \chi (\bar{\Omega}+\iota(\bm{q}); \sigma), \;\; \bm{q}(\bar{\Omega};0)=0,
	\end{equation*}
	where
	\begin{equation*}
	\chi (\bar{\Omega}; \sigma) = \frac{\frac{3-\alpha}{2(\alpha+1)} \mathcal{I}^{\frac{1-3\alpha}{2(\alpha+1)}}  c_1(\tau) \bm{f}(\bar{\Omega})}{1 - \sigma \frac{3-\alpha}{2(\alpha+1)} \mathcal{I}^{\frac{1-3\alpha}{2(\alpha+1)}}  c_1(\tau) \p_\omega\bm{f}(\bar{\Omega}) }.
	\end{equation*}
	As above, the denominator is greater than $1-\frac{3-\alpha}{2(\alpha+1)}>0$ by assumption. Thus, the right-hand side $\chi$ is $\mathcal{C}^{3}(\TT^N\times [0,1])$ for all $\bar{\Omega} \in \TT^N$. Therefore the differential equation has a unique solution $\bm{q}(\bar{\Omega},\sigma)$ for all $\sigma \in [0,1]$. Now, the function $q(\varphi)=\bm{q}(\iota_{\bar{\Theta}}(\varphi),1)$ has the desired properties.
\end{proof}
\begin{rmk}
	Without loss of generality we can impose the assumption of Lemma \ref{lem q quasip}, since $\lVert \bm{f} \rVert_{\mathcal{C}^4(\TT^N)} \leq \gamma \kappa_0^2 \lVert \bm{p} \rVert_{\mathcal{C}^4(\TT^N)}$ and thus $\mathcal{I}_{**}$ still depends only upon $\alpha, \omega$ and $\lVert \bm{p} \rVert_{\mathcal{C}^4(\TT^N)}$.
\end{rmk}
Now, since
\begin{equation*}
H_1(\varphi,\mathcal{I};\tau) = H(\phi,I;\tau) + \p_{\tau} \Psi(\phi,\mathcal{I};\tau),
\end{equation*}
where $ \p_{\tau} \Psi(\phi,\mathcal{I};\tau) = - \mathcal{I}^\frac{3-\alpha}{2(\alpha+1)}f(\phi)c(\tau)$, and because of Lemma \ref{lem q quasip} we have
\begin{equation*}
H_1(\varphi,\mathcal{I};\tau) = H(\varphi + q(\varphi,\mathcal{I};\tau),I;\tau)  - \mathcal{I}^\frac{3-\alpha}{2(\alpha+1)}f(\varphi + q(\varphi,\mathcal{I};\tau))c(\tau).
\end{equation*}
Moreover, $I$ can be expressed as
\begin{equation*}
I =\mathcal{I}  - \mathcal{I}^\frac{3-\alpha}{2(\alpha+1)}\dot f(\phi)c_1(\tau)
= \mathcal{I}  - \mathcal{I}^\frac{3-\alpha}{2(\alpha+1)}\dot f(\varphi + q(\varphi,\mathcal{I};\tau))c_1(\tau).
\end{equation*}
Hereby motivated, we define the $\mathcal{C}^3$-maps $\bm{I},\bm{H_1}:\TT^N \times [\mathcal{I}_{**},\infty)\times \RN \to \RN$ by
\begin{equation*}
\bm{I}(\bar{\Omega},\mathcal{I};\tau) = \mathcal{I}  - \mathcal{I}^\frac{3-\alpha}{2(\alpha+1)}\p_\omega \bm{f}(\bar{\Omega} + \iota(\bm{q}(\bar{\Omega},\mathcal{I};\tau)))c_1(\tau)
\end{equation*}
and further
\begin{align*} 
\bm{H_1}(\bar{\Omega},\mathcal{I};\tau)
= \bm{H}(\bar{\Omega} + \iota(\bm{q}(\bar{\Omega},\mathcal{I};\tau)),\bm{I}(\bar{\Omega},\mathcal{I};\tau);\tau)  
- \mathcal{I}^\frac{3-\alpha}{2(\alpha+1)}\bm{f}(\bar{\Omega} + \iota(\bm{q}(\bar{\Omega},\mathcal{I};\tau)))c(\tau).
\end{align*}
Then the relation $H_1(\varphi,\mathcal{I};\tau) = \bm{H_1}(\iota_{\bar{\Theta}}(\varphi),\mathcal{I};\tau)$ holds for all $(\varphi,\mathcal{I};\tau) \in \RN \times [\mathcal{I}_{**},\infty)\times \RN$.

\section{Proof of the main result} \label{section main result}

Let us recall equation (\ref{DGL 0}), that is
\begin{equation*} 
\ddot x + \lvert x \rvert^{\alpha-1}x = p(t), \;\; \alpha \geq 3,
\end{equation*}
where $p \in \mathcal{C}^4_b(\RN)$.
Denote by $x(t;v_0,t_0)$ the solution of this equation to the initial condition 
\begin{equation*}
x(t_0)=0, \;\; \dot{x}(t_0)=v_0.
\end{equation*}
Now set $\mathcal{I}^* = \max\{4 \mathcal{I}_{**}, (2\kappa_1)^{\frac{\alpha+3}{2}} \}$ and define $v_* = -2 \sqrt{\mathcal{I}^*}$. Thus $v_*$ is a constant depending only upon $\alpha, \omega$ and $\lVert \bm{p} \lVert_{\mathcal{C}^k(\TT^N)}$. Then, consider the function $\psi$ that maps the initial values $(v_0,t_0)\in (-\infty,v_*) \times \RN$ to $(v_1,t_1)$, where $v_1 = \dot{x}(t_1;v_0,t_0)$ and 
\begin{equation*}
t_1 = \inf \{s \in (t_0,\infty): x(s;v_0,t_0)=0, \; \dot{x}(s;v_0,t_0)<0 \}.
\end{equation*}
We want to show, that this map is well defined on $(-\infty,v_*) \times \RN$. To this end, let us recall the transformations of section \ref{section transformations}:
\begin{equation*}
(x,v;t) \overset{\mathcal{R}}{\rightarrow} (\bar{\vartheta},r;t) \hookrightarrow  (\vartheta,r;t) \overset{\mathcal{S}}{\rightarrow} (\phi,I;\tau) \overset{\mathcal{T}}{\rightarrow} (\varphi,\mathcal{I};\tau)
\end{equation*}
Since $x=0$ and $v<0$ corresponds to $\bar{\vartheta} = \pi/2$ and therefore $\vartheta = \tau \in \{\pi/2 + 2\pi \IN\}$, one variable becomes redundant if we stay on the lower $y$-axis. Therefore we consider restrictions of the transformation maps onto some $2$-dimensional subspaces, namely: 
\begin{align*}
&\mathcal{R}_0:(-\infty,0) \times \RN \to (0,\infty) \times \RN, \;\; \mathcal{R}_0(v,t) = (\pi_2(\mathcal{R}(0,v;t)),\pi_3(\mathcal{R}(0,v;t))), \\
&\mathcal{S}_0:[r_*,\infty) \times \RN \to \RN \times (0,\infty), \;\; \mathcal{S}_0(r,t) = (\pi_1(\mathcal{S}(\pi/2,r;t)),\pi_2(\mathcal{S}(\pi/2,r;t))), \;\; \text{and} \\
&\mathcal{T}_0:  \RN \times [I_{**},\infty) \to \RN \times (0,\infty), \;\; \mathcal{T}_0(\phi,I) = (\pi_1(\mathcal{T}(\phi,I;\pi/2)),\pi_2(\mathcal{T}(\phi,I;\pi/2))),
\end{align*}
where $\pi_j:\RN^3 \to \RN$ denotes the projection on to the $j$-th component.
For $\lvert v_0 \rvert$ sufficiently large, let $\mathcal{R}_0(v_0,t_0) = (r_0,t_0)$, $\mathcal{S}_0(r_0,t_0) = (\phi_0,I_0)$ and $\mathcal{T}_0(\phi_0,I_0)=(\varphi_0,\mathcal{I}_0)$. Plugging $x_0=0$ into (\ref{eq energy action}) and the definition of $I_0$ give us
\begin{equation} \label{eq v r I}
\frac{1}{2}v_0^2  = \kappa_1 r_0^{\frac{2(\alpha+1)}{\alpha+3}} = \mathcal{H}(\pi/2,r_0;t_0) = I_0,
\end{equation}
due to $c(\pi/2)=0$. Moreover, Theorem \ref{thm ham transformation} yields $2I_0 \geq \mathcal{I}_0 \geq \frac{I_0}{2}$. So in total we have
\begin{equation*}
\mathcal{I}_0 \geq \frac{I_0}{2} = \frac{v_0^2}{4}.
\end{equation*}
This implies $(\mathcal{T}_0 \circ \mathcal{S}_0 \circ \mathcal{R}_0) ((-\infty,v_*)\times \RN) \subset \RN \times (\mathcal{I}^*,\infty)$. For $\mathcal{I}_0>\mathcal{I}^*$ on the other hand, Lemma \ref{lem I abs} yields the existence of the corresponding solution $(\varphi,\mathcal{I})(\tau;\varphi_0,\mathcal{I}_0,\pi/2)$ to system (\ref{Ham 4 equations}) on $[\pi/2,5\pi/2]$ and guarantees $\mathcal{I}(\tau) >\mathcal{I}_{**}$. But since
\begin{equation} \label{inklusion transform back}
\mathcal{T}_0^{-1}(\RN\times[\mathcal{I_{**}},\infty))  \subset \RN\times[{I_*},\infty) \text{ and } \mathcal{S}_0^{-1}(\RN\times[{I_{*}},\infty))  \subset \RN\times[{r_*},\infty),
\end{equation}
$(\varphi,\mathcal{I})$ can be transformed back to a solution $x(t;v_0,t_0)$ of the original problem (\ref{DGL 0}) and
\begin{equation*}
(v_1,t_1) = (\mathcal{R}_0^{-1} \circ \mathcal{S}_0^{-1} \circ \mathcal{T}_0^{-1})(\varphi(5\pi/2),\mathcal{I}(5\pi/2))
\end{equation*}
has all the desired properties. Thus, $\psi$ can be equivalently defined in the following way:
\begin{equation*} \label{def psi}
\psi:(-\infty,v_*) \times \RN \to (-\infty,0)\times \RN, \;\;  \psi =   (\mathcal{R}_0^{-1} \circ \mathcal{S}_0^{-1} \circ \mathcal{T}_0^{-1})  \circ \Phi \circ (\mathcal{T}_0 \circ \mathcal{S}_0 \circ \mathcal{R}_0)
\end{equation*}
Finally we are ready to state and prove the main theorem.
\begin{thm} \label{main theorem}
	Let $\bm{p}\in \mathcal{C}^4(\TT^N)$ generate the family of forcing functions
	\begin{equation*} 
	p_{\bar{\Theta}}(t)=\bm{p}(\overline{\theta_1+t\omega_1}, \ldots, \overline{\theta_N+t\omega_N}), \;\; \bar{\Theta}=(\bar\theta_1,\ldots,\bar\theta_N)\in\TT^N,
	\end{equation*}
	for fixed rationally independent frequencies $\omega_1,\ldots,\omega_N>0$. Let $(v_n,t_n)_{n \in J} = (\psi^n(v_0,t_0))_{n \in J}$ denote a generic orbit associated to system (\ref{DGL 0}) with $p$ replaced by $p_{\bar{\Theta}}$. The escaping set $\mathcal{E}_{\bar{\Theta}}$ consists of those initial values $(v_0,t_0) \in \RN^2$ such that
	\begin{enumerate} [label=(\alph*)]
		\item $v_0<v_*$,
		\item the corresponding orbit satisfies $\NN \subset J$, and 
		\item $\lim_{n\to\infty} v_n = -\infty$.
	\end{enumerate}
	Then, for almost all $\bar{\Theta} \in \TT^N$, the set $\mathcal{E}_{\bar{\Theta}}$ has Lebesgue measure zero.
\end{thm}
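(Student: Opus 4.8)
The plan is to identify the Poincaré map $\Phi$ from (\ref{Def successor}) with a map of the type governed by Theorem \ref{thm escaping set nullmenge}, to use the momentum $\mathcal{I}$ itself as the generalized adiabatic invariant $W$, and finally to carry the conclusion back to $\psi$ through the diffeomorphism $g=\mathcal{T}_0\circ\mathcal{S}_0\circ\mathcal{R}_0$. First I would produce the required measure-preserving embedding. By Section \ref{section quasi periodicity} the system (\ref{Ham 4 equations}) is driven by the quasi-periodic Hamiltonian $H_1(\varphi,\mathcal{I};\tau)=\bm{H_1}(\iota_{\bar\Theta}(\varphi),\mathcal{I};\tau)$, so both components of its vector field are functions of $(\iota_{\bar\Theta}(\varphi),\mathcal{I},\tau)$; by uniqueness of solutions, the increments $\varphi_1-\varphi_0$ and $\mathcal{I}_1-\mathcal{I}_0$ produced by $\Phi$ depend on $(\varphi_0,\mathcal{I}_0)$ only through $(\iota_{\bar\Theta}(\varphi_0),\mathcal{I}_0)$. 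This gives continuous $F,G:\TT^N\times(\mathcal{I}_{**},\infty)\to\RN$ and a map
\begin{equation*}
f(\bar\Theta,\mathcal{I})=(\bar\Theta+\iota(F(\bar\Theta,\mathcal{I})),\,\mathcal{I}+G(\bar\Theta,\mathcal{I}))
\end{equation*}
on $\mathcal{D}=\TT^N\times(\mathcal{I}_{**},\infty)$, which is of the form (\ref{embedding form}) and, by construction, fits the commutative diagram of subsection \ref{subsection quasi-periodic functions} with $f_{\bar\Theta}=\Phi$. I would then check that $f$ is a measure-preserving embedding: by Lemma \ref{lem I abs} its $\mathcal{I}$-component stays $>\mathcal{I}_{**}/4>0$, so $f$ maps $\mathcal{D}$ into $\TT^N\times(0,\infty)$; continuity is immediate; measure-preservation is inherited from the area-preservation of the time-$2\pi$ flow $\Phi$ of the $2\pi$-periodic Hamiltonian $H_1$, via the local identification $\iota_{\bar\Theta}\times\id$; and injectivity follows since the inverse flow over one period has the same quasi-periodic structure, yielding a map $\tilde f$ of the same shape with $\tilde f\circ f=\id$ on the dense set $\iota_{\bar\Theta}(\RN)\times(\mathcal{I}_{**},\infty)$, hence on all of $\mathcal{D}$.

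Next I would take $W(\bar\Theta,\mathcal{I})=\mathcal{I}$, so that $\p_r W\equiv1$ and the pinching of Theorem \ref{thm escaping set nullmenge} holds with $\beta=\delta=1$. Lemma \ref{lem adiabatic invariant} gives $G(\bar\Theta,\mathcal{I})=\mathcal{I}_1-\mathcal{I}_0\le C\mathcal{I}^{b_\alpha}$, so (\ref{ineq adiabatic inv}) holds with
\begin{equation*}
k(\mathcal{I})=\begin{cases}C\,\mathcal{I}^{b_\alpha},&\mathcal{I}\ge\mathcal{I}_{**},\\ C\,\mathcal{I}_{**}^{b_\alpha},&0<\mathcal{I}<\mathcal{I}_{**};\end{cases}
\end{equation*}
since $b_\alpha<0$ by Theorem \ref{thm ham transformation}\,(c), this $k$ is decreasing, bounded, and tends to $0$ at infinity. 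Theorem \ref{thm escaping set nullmenge} then gives, for almost every $\bar\Theta\in\TT^N$, that the escaping set $E_{\bar\Theta}\subset\RN\times(\mathcal{I}_{**},\infty)$ of $f_{\bar\Theta}=\Phi$ is Lebesgue-null.

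Finally I would transfer this to $\mathcal{E}_{\bar\Theta}$. By Section \ref{section main result}, $g$ is a $\mathcal{C}^1$-diffeomorphism of $(-\infty,v_*)\times\RN$ onto its image, which lies in $\RN\times(\mathcal{I}^*,\infty)$, and $\psi=g^{-1}\circ\Phi\circ g$ there; from (\ref{eq v r I}) and Theorem \ref{thm ham transformation}\,(iii) the image $(\varphi_0,\mathcal{I}_0)=g(v_0,t_0)$ satisfies $v_0^2/4\le\mathcal{I}_0\le v_0^2$, so $v_0<v_*$ forces $\mathcal{I}_0>\mathcal{I}^*\ge4\mathcal{I}_{**}$ and $v_n\to-\infty$ is equivalent to $\mathcal{I}_n\to\infty$. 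If $(v_0,t_0)\in\mathcal{E}_{\bar\Theta}$, conditions (a)--(c) of Theorem \ref{main theorem} force $v_n<v_*$ for all $n\in\NN_0$; an induction using $\psi=g^{-1}\circ\Phi\circ g$, Lemma \ref{lem I abs} and $\mathcal{I}_n\ge v_n^2/4>\mathcal{I}_{**}$ then shows the full forward $\Phi$-orbit of $(\varphi_0,\mathcal{I}_0)$ is defined, remains in $\RN\times(\mathcal{I}_{**},\infty)$, equals $(g(v_n,t_n))_n$, and has $\mathcal{I}_n\to\infty$; hence $g(\mathcal{E}_{\bar\Theta})\subset E_{\bar\Theta}$. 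Since $g$ and $g^{-1}$ are locally Lipschitz they preserve Lebesgue-null sets, so $\mathcal{E}_{\bar\Theta}=g^{-1}(g(\mathcal{E}_{\bar\Theta}))$ is Lebesgue-null for almost every $\bar\Theta$, which is the assertion. The hard part will be the first step — checking that the area-preserving Poincaré map $\Phi$ together with the quasi-periodic structure genuinely yields a measure-preserving embedding in the precise sense of subsection \ref{subsection quasi-periodic functions}, and aligning the iterated domains $\mathcal{D}_n$, $D_{\bar\Theta,n}$ with $\psi$ so that complete $\psi$-orbits correspond exactly to complete $f_{\bar\Theta}$-orbits; everything after that is bookkeeping built on Lemmas \ref{lem I abs} and \ref{lem adiabatic invariant}.
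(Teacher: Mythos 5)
Your proposal is correct and follows essentially the same path as the paper: constructing the measure-preserving embedding from the quasi-periodic Hamiltonian flow, choosing $W=\mathcal{I}$ and $k(\mathcal{I})=C\mathcal{I}^{b_\alpha}$ via Lemma \ref{lem adiabatic invariant}, applying Theorem \ref{thm escaping set nullmenge}, and pulling the null escaping set $E_{\bar{\Theta}}$ back through $\mathcal{R}_0^{-1}\circ\mathcal{S}_0^{-1}\circ\mathcal{T}_0^{-1}$. The only difference is cosmetic: in the final pull-back you invoke the general fact that locally Lipschitz maps preserve null sets, whereas the paper makes this explicit by checking that $\mathcal{T}_0^{-1}$ is symplectic, that $\lvert\det J_{\mathcal{S}_0^{-1}}\rvert\le1$ from (\ref{Mono abs}), and that $\lvert\det J_{\mathcal{R}_0^{-1}}\rvert<1$ on the relevant set from the choice of $\mathcal{I}^*$.
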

The proof will be divided in two parts: First we are going to construct a measure-preserving embedding suitable for Theorem \ref{thm escaping set nullmenge}, which basically translates into the successor map $\Phi$ of system (\ref{Ham 4 equations}). Therefore it can be shown that for almost all $\bar{\Theta} \in \TT^N$ the corresponding escaping set $E_{\bar{\Theta}}$ has Lebesgue measure zero. In the second part we will prove that initial values in $\mathcal{E}_{\bar{\Theta}}$ correspond to points in $E_{\bar{\Theta}}$ and conclude $\lambda^2(\mathcal{E}_{\bar{\Theta}})=0$ for almost all $\bar{\Theta} \in \TT^N$.

\subsection{Escaping orbits of the transformed system}

For $\bar{\Theta}\in \TT^N$, denote by $(\varphi_{\bar{\Theta}}(\tau;\varphi_0,\mathcal{I}_0,\pi/2),\mathcal{I}_{\bar{\Theta}}(\tau;\varphi_0,\mathcal{I}_0,\pi/2))$ the solution to system (\ref{Ham 4 equations}) with initial data $\varphi(\pi/2)= \varphi_0, \; \mathcal{I}(\pi/2)= \mathcal{I}_0$ and forcing function $p = p_{\bar{\Theta}}$. 
Furthermore, we will write  $(\varphi(\tau;\bar{\Theta}_0,\mathcal{I}_0,\pi/2),\mathcal{I}(\tau;\bar{\Theta}_0,\mathcal{I}_0,\pi/2))$ for the solution of
\begin{align} \label{DGL on torus}
\varphi' = \p_{\mathcal{I}} \bm{H_1}(\bar{\Theta}_0 +\iota(\varphi),\mathcal{I};\tau), 
\;\; \mathcal{I}'= -\p_\omega \bm{H_1}(\bar{\Theta}_0 +\iota(\varphi),\mathcal{I};\tau), 
\end{align}
with $\bm{H_1}$ defined as in the last section and the initial values $\varphi(\pi/2)= 0, \; \mathcal{I}(\pi/2)= \mathcal{I}_0$. \\
If we have $\bar{\Theta}_0 = \iota_{\bar{\Theta}}(\varphi_0)$ these solutions meet the identity
\begin{equation} \label{Eq gleiche Losungen}
(\varphi_{\bar{\Theta}}(\tau;\varphi_0,\mathcal{I}_0,\pi/2),\mathcal{I}_{\bar{\Theta}}(\tau;\varphi_0,\mathcal{I}_0,\pi/2)) = (\varphi_0 +\varphi(\tau;\bar{\Theta}_0,\mathcal{I}_0,\pi/2),\mathcal{I}(\tau;\bar{\Theta}_0,\mathcal{I}_0,\pi/2)).
\end{equation}
Let $F,G:\mathcal{D} \to \RN$, where $\mathcal{D}=\TT^N \times (\mathcal{I}^*,\infty)$, be defined by
\begin{equation*}
F(\bar{\Theta}_0,\mathcal{I}_0) 
= \int_{\frac{\pi}{2}}^{\frac{5\pi}{2}} \varphi'(\tau;\bar{\Theta}_0,\mathcal{I}_0,\pi/2) \, d\tau
\end{equation*}
and
\begin{equation*}
G(\bar{\Theta}_0,\mathcal{I}_0) 
= \int_{\frac{\pi}{2}}^{\frac{5\pi}{2}} \mathcal{I}'(\tau;\bar{\Theta}_0,\mathcal{I}_0,\pi/2) \, d\tau,
\end{equation*}
respectively. And consider $g:\mathcal{D}\to \TT^N\times(0,\infty)$ given by
\begin{equation*}
g(\bar{\Theta}_0,\mathcal{I}_0) = (\bar{\Theta}_0 + \iota(F(\bar{\Theta}_0,\mathcal{I}_0) ), \mathcal{I}_0 + G(\bar{\Theta}_0,\mathcal{I}_0) ).
\end{equation*}
Then $F$ and $G$ are continuous, since the solution of (\ref{DGL on torus}) depends continuously upon the initial condition and the parameter $\bar{\Theta}_0$. Therefore $g$ has special form (\ref{embedding form}). The corresponding family of maps of the plane $\{g_{\bar{\Theta}}\}_{\bar{\Theta}\in \TT^N}$ as in (\ref{planar maps}) is
\begin{align*}
&g_{\bar{\Theta}}:D_{\bar{\Theta}}\subset \RN \times (0,\infty)\to \RN \times (0,\infty),\\
&g_{\bar{\Theta}}(\varphi_0,\mathcal{I}_0) = (\varphi_0 + F(\bar{\Theta}+\iota(\varphi_0), \mathcal{I}_0), \mathcal{I}_0 + G(\bar{\Theta}+\iota(\varphi_0), \mathcal{I}_0)),
\end{align*}
where $D_{\bar{\Theta}} = (\iota_{\bar{\Theta}}\times \id)^{-1}(\mathcal{D}) = \RN \times (\mathcal{I_{**}},\infty)$. Because of (\ref{Eq gleiche Losungen}) these maps coincide with the successor map $\Phi$ of (\ref{Def successor}) for the forcing function $p_{\bar{\Theta}}$. \\
The injectivity of $g$ is a consequence of the unique resolvability of the initial value problem
\begin{align*}
\varphi' = \p_{\mathcal{I}} \bm{H}_1(\bar{\Theta}_1 +\iota(\varphi),\mathcal{I};\tau), 
\;\; \mathcal{I}'= -\p_\omega \bm{H}_1(\bar{\Theta}_1 +\iota(\varphi),\mathcal{I};\tau), 
\end{align*}
with $\varphi(5\pi/2)= 0$ and $\mathcal{I}(5\pi/2)= \mathcal{I}_1$, where $(\bar{\Theta}_1,\mathcal{I}_1)=g(\bar{\Theta}_0,\mathcal{I}_0)$.\\
It remains to proof that $g$ is measure-preserving. Since the maps $g_{\bar{\Theta}}$ correspond to a Hamiltonian flow, Liouville's theorem yields $\det J_{g_{\bar{\Theta}}}(\varphi_0,\mathcal{I}_0) = 1$, i.e.
\begin{equation*}
1=  \left(1+\p_\omega F(\bar{\Theta}+\iota(\varphi_0), \mathcal{I}_0)\right)\left(1+\p_{\mathcal{I}_0}G(\bar{\Theta}+\iota(\varphi_0), \mathcal{I}_0)\right)
- \left(\p_{\mathcal{I}_0}F(\bar{\Theta}+\iota(\varphi_0), \mathcal{I}_0)\right)\left(\p_\omega G(\bar{\Theta}+\iota(\varphi_0), \mathcal{I}_0)\right).
\end{equation*}
Since this is valid for all  $\bar{\Theta} \in \TT^N$ and all of $D_{\bar{\Theta}}$, it follows 
\begin{equation*}
1=  \left(1+\p_\omega F \right) \left(1+\p_{\mathcal{I}_0}G\right)
- \left(\p_{\mathcal{I}_0}F\right) \left(\p_\omega G\right).
\end{equation*}
However, as shown in \cite[Lemma~3.3]{kunze_ortega_ping_pong} this implies that the map $g$ is orientation- and measure-preserving. \\
Hence we have shown that $g$ is a measure-preserving embedding. Now, we have to find functions $W,k$ as described in Theorem \ref{thm escaping set nullmenge}. Since $C$ from Lemma \ref{lem adiabatic invariant} depends only upon $\lVert f \rVert_{\mathcal{C}_b^4(\RN)}$, this constant is uniform in $\bar{\Theta} \in \TT^N$. Therefore, if we take $W(\bar{\Theta}_0,\mathcal{I}_0)=\mathcal{I}_0$, Lemma \ref{lem adiabatic invariant} implies
\begin{equation*}
W(g(\bar{\Theta}_0,\mathcal{I}_0)) - W(\bar{\Theta}_0,\mathcal{I}_0) = \mathcal{I}_1 - \mathcal{I}_0 \leq k(\mathcal{I}_0),
\end{equation*}
where $k(\mathcal{I}_0) = C \mathcal{I}_0^{b_\alpha}$ with $C$ as mentioned above and $b_\alpha<0$ from Theorem \ref{thm ham transformation}. That way $W$ and $k$ meet all demanded criteria. Thus the measure-preserving embedding $g$ satisfies all conditions of Theorem \ref{thm escaping set nullmenge} and we are finally ready to apply it. This gives us $\lambda^2(E_{\bar{\Theta}})=0$ for almost all $\bar{\Theta} \in \TT^N$ for the escaping set
\begin{equation*}
{E_{\bar{\Theta}}}= \{(\varphi_0,\mathcal{I}_0)\in{D}_{{\bar{\Theta}},\infty}: \lim_{n\to\infty}\mathcal{I}_n=\infty  \},
\end{equation*}
where ${D}_{{\bar{\Theta}},\infty}$ is the set of initial conditions leading to complete forward orbits of $g_{\bar{\Theta}}$ as described in subsection \ref{subsection quasi-periodic functions}. \\

\subsection{Undoing the transformations}

Now let $\bar{\Theta}\in \TT^N$ be fixed and consider the set
\begin{equation*}
\tilde{E}_{\bar{\Theta}} = (\mathcal{R}_0^{-1} \circ \mathcal{S}_0^{-1} \circ \mathcal{T}_0^{-1})( {E_{\bar{\Theta}}} ).
\end{equation*}
Our strategy will be to show $\lambda^2(\tilde{E}_{\bar{\Theta}})=0$ and $\mathcal{E}_{\bar{\Theta}} \subset \tilde{E}_{\bar{\Theta}}$. \\
Since $\mathcal{T}^{-1}(\cdot,\cdot;\tau)$ is symplectic for all $\tau \in \RN$, it follows $\lambda^2(\mathcal{T}_0^{-1}(E_{\bar{\Theta}})) = \lambda^2({E_{\bar{\Theta}}}) = 0$. Also, due to (\ref{inklusion transform back}) it is $\mathcal{S}_0^{-1}(\mathcal{T}_0^{-1}( {E_{\bar{\Theta}}} )) \subset [{r_*},\infty)\times \RN$. But then, since $\mathcal{S}_0(r_0,t_0) = (t_0,\mathcal{H}(0,r_0;t_0))$, the definition of $r_*$ (\ref{Mono abs}) yields $\left\lvert \det J_{\mathcal{S}_0^{-1}}(\phi,I) \right\rvert = \left\lvert \p_r \mathcal{H} (0,\mathcal{S}_0^{-1}(\phi,I)) \right\rvert^{-1} \leq 1$ on $\mathcal{T}_0^{-1}( {E_{\bar{\Theta}}} )$. Hence
\begin{equation*}
\lambda^2(\mathcal{S}_0^{-1}(\mathcal{T}_0^{-1}( {E_{\bar{\Theta}}} ))) \leq  \lambda^2(\mathcal{T}_0^{-1}({E_{\bar{\Theta}}})) = 0.
\end{equation*}
Finally, due to (\ref{eq v r I}),(\ref{abs I aus lemma existenzint}) and the definition of $\mathcal{I}^*$ we have
\begin{equation*}
\kappa_1 r_0^{\frac{2(\alpha+1)}{\alpha+3}} > \frac{\mathcal{I}^*}{2} \geq \frac{1}{2}(2\kappa_1)^{\frac{\alpha+3}{2}} \;\; \text{for} \;\; (r_0,t_0) \in \mathcal{S}_0^{-1}(\mathcal{T}_0^{-1}( {E_{\bar{\Theta}}} ))
\end{equation*}
and consequently $\left\lvert \det J_{\mathcal{R}_0^{-1}}(r_0,t_0) \right\rvert  =\left\lvert - \sqrt{2 \kappa_1} \frac{\alpha +1}{\alpha + 3} r_0^{-\frac{2}{\alpha+3}}\right\rvert <1$. Therefore we can conclude
\begin{equation*}
\lambda^2( \tilde{E}_{\bar{\Theta}} ) =  0.
\end{equation*}
It remains to show that $\mathcal{E}_{\bar{\Theta}} \subset \tilde{E}_{\bar{\Theta}}$. If $\mathcal{E}_{\bar{\Theta}}$ is empty there is nothing to prove. Otherwise let $(v_0,t_0) \in \mathcal{E}_{\bar{\Theta}}$ and consider the corresponding orbit
\begin{equation*}
(v_n,t_n) = \psi^n(v_0,t_0), \;\; n\in\NN.
\end{equation*}
Above we have demonstrated $\psi =   (\mathcal{R}_0^{-1} \circ \mathcal{S}_0^{-1} \circ \mathcal{T}_0^{-1})  \circ \Phi \circ (\mathcal{T}_0 \circ \mathcal{S}_0 \circ \mathcal{R}_0)$, which means
\begin{equation*}
(v_n,t_n) =  (\mathcal{R}_0^{-1} \circ \mathcal{S}_0^{-1} \circ \mathcal{T}_0^{-1}) (\varphi_n,\mathcal{I}_n)
\end{equation*}
with the notation as usual. Since also
\begin{equation*}
\mathcal{I}_n \geq \frac{v_n^2}{2}
\end{equation*}
holds for all $n \in \NN$, we can conclude $\mathcal{I}_n\to\infty$ as $n \to\infty$. But this implies $(\varphi_0,\mathcal{I}_0) \in E_{\bar{\Theta}}$ and therefore completes the proof.

\appendix

\bibliographystyle{alpha}
\bibliography{bibliography}

\end{document}